\documentclass[12pt]{article} 
\usepackage{amsfonts,amsmath,latexsym,amssymb,mathrsfs,amsthm,comment}
\usepackage{slashbox}
\usepackage{caption}

\evensidemargin0cm
\oddsidemargin0cm
\textwidth16cm
\textheight22.8cm
\topmargin-1.7cm  



\let\OLDthebibliography\thebibliography
\renewcommand\thebibliography[1]{
  \OLDthebibliography{#1}
  \setlength{\parskip}{3pt}
  \setlength{\itemsep}{0pt plus 0.3ex}
}


%

\def\numberlikeadb{\global\def\theequation{\thesection.\arabic{equation}}}
\numberlikeadb
\newtheorem{theorem}{Theorem}[section]
\newtheorem{lemma}[theorem]{Lemma}

\newtheorem{corollary}[theorem]{Corollary}

\newtheorem{remark}[theorem]{Remark}

\usepackage{lscape}
\usepackage{caption}
\usepackage{multirow}
\begin{document}

\title{Bounds for modified Struve functions of the first kind and their ratios}
\author{Robert E. Gaunt\footnote{School of Mathematics, The University of Manchester, Manchester M13 9PL, UK}}

\date{\today} 
\maketitle

\vspace{-5mm}

\begin{abstract}We obtain a simple two-sided inequality for the ratio $\mathbf{L}_\nu(x)/\mathbf{L}_{\nu-1}(x)$ in terms of the ratio $I_\nu(x)/I_{\nu-1}(x)$, where $\mathbf{L}_\nu(x)$ is the modified Struve function of the first kind and $I_\nu(x)$ is the modified Bessel function of the first kind.  This result allows one to use the extensive literature on bounds for $I_\nu(x)/I_{\nu-1}(x)$ to immediately deduce bounds for $\mathbf{L}_\nu(x)/\mathbf{L}_{\nu-1}(x)$.  We note some consequences and obtain further bounds for $\mathbf{L}_\nu(x)/\mathbf{L}_{\nu-1}(x)$ by adapting techniques used to bound the ratio $I_\nu(x)/I_{\nu-1}(x)$.  We apply these results to obtain new bounds for the condition numbers $x\mathbf{L}_\nu'(x)/\mathbf{L}_\nu(x)$, the ratio $\mathbf{L}_\nu(x)/\mathbf{L}_\nu(y)$ and the modified Struve function $\mathbf{L}_\nu(x)$ itself.  Amongst other results, we obtain two-sided inequalities for $x\mathbf{L}_\nu'(x)/\mathbf{L}_\nu(x)$ and $\mathbf{L}_\nu(x)/\mathbf{L}_\nu(y)$ that are given in terms of $xI_\nu'(x)/I_\nu(x)$ and $I_\nu(x)/I_\nu(y)$, respectively, which again allows one to exploit the substantial literature on bounds for these quantities.  The results obtained in this paper complement and improve existing bounds in the literature. 
\end{abstract}

\noindent{{\bf{Keywords:}}} Modified Struve function of the first kind; bounds; ratios of modified Struve functions; condition numbers; modified Bessel function of the first kind

\noindent{{{\bf{AMS 2010 Subject Classification:}}} Primary 33C20; 26D07. Secondary 33C10

\section{Introduction}\label{intro}

The ratios of modified Bessel functions $I_{\nu}(x)/I_{\nu-1}(x)$ and $K_{\nu-1}(x)/K_{\nu}(x)$ arise in many areas of the applied sciences, including epidemiology \cite{nh73}, chemical kinetics \cite{lbf03} and signal processing \cite{ks08}; see \cite{segura} and references therein for further applications.  These ratios are also key computational tools in the construction of numerical algorithms for computing modified Bessel functions (see, for example, Algorithms 12.6 and 12.7 of \cite{ast07}).  There is now an extensive literature on lower and upper bounds for these ratios; see \cite{amos74,b15,g32,kg13,ifantis,il07,im78, ln10,lorch, nasell,nasell2,pm50, rs16,segura,s12,soni}.  There is also a considerable literature on lower and upper bounds for the ratios $I_\nu(x)/I_\nu(y)$ and $K_\nu(x)/K_\nu(y)$; see \cite{amos74, b09, baricz2, bs09, bord, ifantis, jb96, jbb, l91,lm89, paris, ross73, si95}, which has been used, for example, to obtain tight bounds for the generalized Marcum Q-function, which arises in radar signal processing \cite{b09, bs09}.  

The modified Struve functions are related to the modified Bessel functions.  Likewise, they arise in manyfold applications, including leakage inductance in transformer windings \cite{hw94}, perturbation approximations of lee waves in a stratified flow \cite{mh69}, scattering of plane waves by circular cylinders \cite{s84} and lift and downwash distributions of oscillating wings in subsonic and supersonic flow \cite{w55,w54}; see \cite{bp13} for a list of further application areas.  

The first detailed study of inequalities for modified Struve functions was \cite{jn98}, in which two-sided inequalities for modified Struve functions and their ratios were obtained, together with Tur\'{a}n and Wronski type inequalities.  Recently, \cite{bp14} used a classical result on the monotonicity of quotients of Maclaurin series and techniques developed in the extensive study of modified Bessel functions and their ratios to obtain monotonicity results and, as a consequence, functional inequalities for the modified Struve function of the first kind $\mathbf{L}_\nu(x)$  that complement and improve the results of \cite{jn98}.  Further results and a new proof of a Tur\'{a}n-type inequality for the modified Struve function of the first kind are given in \cite{bps17}, and  monotonicity results and functional inequalities for the modified Struve function of the second kind $\mathbf{M}_\nu(x)=\mathbf{L}_\nu(x)-I_\nu(x)$ are given in \cite{bp142}.  It should be noted that the techniques used in \cite{bp14} and \cite{bp142} to obtain functional inequalities for $\mathbf{L}_\nu(x)$ and $\mathbf{M}_\nu(x)$ are quite different (this is also commented on in \cite{g18}), which is in contrast to the literature on modified Bessel functions in which functional inequalities for $I_\nu(x)$ and $K_\nu(x)$ are often developed in parallel.  For this reason, in this paper, with the exception of Remark \ref{rtyy}, we restrict our attention to the modified Struve function $\mathbf{L}_\nu(x)$.  

In this paper, we obtain new bounds for the ratios $\mathbf{L}_\nu(x)/\mathbf{L}_{\nu-1}(x)$ and $\mathbf{L}_\nu(x)/\mathbf{L}_{\nu}(y)$, the condition numbers $x\mathbf{L}_\nu'(x)/\mathbf{L}_\nu(x)$ and the modified Struve function $\mathbf{L}_\nu(x)$ itself.  These results complement and, at least in some cases, improve those given in \cite{bp14,jn98}.  Our approach is quite different, though.  In Section \ref{sec2}, we obtain a simple but accurate two-sided inequality for the ratio $\mathbf{L}_\nu(x)/\mathbf{L}_{\nu-1}(x)$ in terms of the ratio $I_\nu(x)/I_{\nu-1}(x)$.  This result is quite powerful because it allows one to exploit the extensive literature on bounds for $I_\nu(x)/I_{\nu-1}(x)$ to bound $\mathbf{L}_\nu(x)/\mathbf{L}_{\nu-1}(x)$.  We give some examples, and complement these bounds by showing that some of the techniques from the literature used to bound $I_\nu(x)/I_{\nu-1}(x)$ can be easily adapted to bound the ratio $\mathbf{L}_\nu(x)/\mathbf{L}_{\nu-1}(x)$.  In Section \ref{sec3}, we apply these bounds to obtain new bounds for the quantities $x\mathbf{L}_\nu'(x)/\mathbf{L}_\nu(x)$, $\mathbf{L}_\nu(x)/\mathbf{L}_\nu(y)$ and the modified Struve function $\mathbf{L}_\nu(x)$.  Amongst other results, we obtain two-sided inequalities for $x\mathbf{L}_\nu'(x)/\mathbf{L}_\nu(x)$ and $\mathbf{L}_\nu(x)/\mathbf{L}_\nu(y)$ that are given in terms of $xI_\nu'(x)/I_\nu(x)$ and $I_\nu(x)/I_\nu(y)$, respectively, which again allows one to exploit the substantial literature on these quantities.  Through a combination of asymptotic analysis of the bounds and numerical results, we find that, in spite of their simple form, the bounds obtained in this paper are quite accurate and often tight in certain limits.

\section{Upper and lower bounds for the ratio $\mathbf{L}_\nu(x)/\mathbf{L}_{\nu-1}(x)$}\label{sec2}

\subsection{Bounding $\mathbf{L}_\nu(x)/\mathbf{L}_{\nu-1}(x)$ via bounds for $I_\nu(x)/I_{\nu-1}(x)$}\label{sec2.2}

In this section, we obtain a simple but accurate double inequality for the ratio $\mathbf{L}_\nu(x)/\mathbf{L}_{\nu-1}(x)$ in terms of the ratio $I_\nu(x)/I_{\nu-1}(x)$.  The modified Bessel and modified Struve functions $I_\nu(x)$ and $\mathbf{L}_\nu(x)$ are closely related functions that have the following power series representations (see \cite{olver} for these and the forthcoming properties): 
\begin{align}\label{idefn}I_\nu(x)&=\sum_{n=0}^\infty\frac{(\frac{1}{2}x)^{2n+\nu}}{n!\Gamma(n+\nu+1)}, \\
\label{ldefn}\mathbf{L}_\nu(x)&=\sum_{n=0}^\infty\frac{(\frac{1}{2}x)^{2n+\nu+1}}{\Gamma(n+\frac{3}{2})\Gamma(n+\nu+\frac{3}{2})}.
\end{align}
It is immediate from these series representations, and the standard formulas $\Gamma(\frac{3}{2})=\frac{\sqrt{\pi}}{2}$ and $t\Gamma(t)=\Gamma(t+1)$ that, as $x\downarrow0$,
\begin{align}\label{itend0}I_\nu(x)&\sim \frac{x^\nu}{2^\nu\Gamma(\nu+1)}, \quad \nu>-1,\\
\label{ltend0}\mathbf{L}_\nu(x)&\sim \frac{x^{\nu+1}}{\sqrt{\pi}2^{\nu}\Gamma(\nu+\frac{3}{2})}\bigg(1+\frac{x^2}{3(2\nu+3)}\bigg), \quad \nu>-\tfrac{3}{2},
\end{align}
and both functions have very similar behaviour as $x\rightarrow\infty$:
\begin{align}\label{iinfty}I_\nu(x)&\sim\frac{\mathrm{e}^x}{\sqrt{2\pi x}}\bigg(1-\frac{4\nu^2-1}{8x}+\frac{(4\nu^2-1)(4\nu^2-9)}{128x^2}\bigg), \quad \nu\in\mathbb{R}, \\
\label{linfty}\mathbf{L}_\nu(x)&\sim\frac{\mathrm{e}^x}{\sqrt{2\pi x}}\bigg(1-\frac{4\nu^2-1}{8x}+\frac{(4\nu^2-1)(4\nu^2-9)}{128x^2}\bigg),\quad \nu\in\mathbb{R}.
\end{align}
The modified Struve function $\mathbf{L}_\nu(x)$ satisfies the relations
\begin{align}\label{struveid1}\mathbf{L}_{\nu-1}(x)-\mathbf{L}_{\nu+1}(x)&=\frac{2\nu}{x}\mathbf{L}_\nu(x)+a_\nu(x), \\
\label{struveid2}\mathbf{L}_{\nu-1}(x)+\mathbf{L}_{\nu+1}(x)&=2\mathbf{L}_\nu'(x)-a_\nu(x),
\end{align}
where $a_\nu(x)=\frac{(\frac{1}{2}x)^\nu}{\sqrt{\pi}\Gamma(\nu+\frac{3}{2})}$. The modified Bessel function $I_\nu(x)$ satisfies the same relations but without the $a_\nu(x)$ term.  For $\nu>-\frac{3}{2}$, it will be useful to define the function $b_\nu(x):(0,\infty)\rightarrow(0,\frac{1}{2})$ by
\begin{equation}\label{bdefn}b_\nu(x):=\frac{xa_\nu(x)}{2\mathbf{L}_\nu(x)}=\frac{(\frac{1}{2}x)^{\nu+1}}{\sqrt{\pi}\Gamma(\nu+\frac{3}{2})\mathbf{L}_\nu(x)}.
\end{equation}
This function will appear throughout this paper, and we collect some useful basic properties in the following lemma.

\begin{lemma}\label{blemma}The following assertions are true:

\vspace{2mm}

\noindent (i) For $\nu>-\frac{3}{2}$,
\begin{align}\label{bnu0}b_\nu(x)&\sim\frac{1}{2}-\frac{x^2}{6(2\nu+3)}, \quad x\downarrow0, \\
\label{bnu1}b_\nu(x)&\sim \frac{x^{\nu+3/2}\mathrm{e}^{-x}}{2^{\nu+1/2}\Gamma(\nu+\frac{3}{2})}, \quad x\rightarrow\infty.
\end{align}

\vspace{2mm}

\noindent (ii) For fixed $x>0$, $b_\nu(x)$ increases as $\nu$ increases in the interval $(-\frac{3}{2},\infty)$. 

\vspace{2mm}

\noindent (iii) If $\nu>-\frac{3}{2}$, then $b_\nu(x)$ is a decreasing function of $x$ in $(0,\infty)$.  Therefore, for $x>0$,
\begin{equation*}b_\nu(x)<b_\nu(0^+)=\frac{1}{2}, \quad \nu>-\tfrac{3}{2}.
\end{equation*}
This inequality can be improved further to
\begin{equation}\label{bcrude2}b_\nu(x)<\frac{1}{2}\bigg(1+\frac{x^2}{3(2\nu+3)}\bigg)^{-1}, \quad \nu>-\tfrac{3}{2}.
\end{equation}

\noindent (iv) For $x>0$, 
\begin{equation}\frac{x}{2}\mathrm{csch}(x)\label{star5}\leq b_\nu(x)<\frac{x}{4}\mathrm{csch}\bigg(\frac{x}{2\nu+3}\bigg), 
\end{equation}
where the lower and upper bounds are valid for $\nu\geq-\frac{1}{2}$ and $\nu>-1$, respectively.  We have equality in the lower bound if and only if $\nu=-\frac{1}{2}$ and the inequality is reversed if $-\frac{3}{2}<\nu<\frac{1}{2}$.
\end{lemma}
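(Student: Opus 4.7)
The plan is to reduce the lemma's assertions to properties of a single explicit Maclaurin series for $1/b_\nu(x)$. Dividing \eqref{ldefn} by $a_\nu(x)=(x/2)^\nu/(\sqrt{\pi}\Gamma(\nu+\tfrac{3}{2}))$ and using $\Gamma(n+\nu+\tfrac{3}{2})=\Gamma(\nu+\tfrac{3}{2})(\nu+\tfrac{3}{2})_n$, the definition \eqref{bdefn} gives
\begin{equation*}
\frac{1}{b_\nu(x)}=\sqrt{\pi}\sum_{n=0}^{\infty}\frac{(x/2)^{2n}}{\Gamma(n+\tfrac{3}{2})(\nu+\tfrac{3}{2})_n},
\end{equation*}
with constant term $\sqrt{\pi}/\Gamma(\tfrac{3}{2})=2$. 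Most of parts (i)--(iii), including the refined bound, fall out of this representation, so I would establish it first.

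Part (i) is then a direct substitution. Using \eqref{ltend0} in \eqref{bdefn}, the factor $(1+x^2/[3(2\nu+3)])$ from $\mathbf{L}_\nu$ passes to $b_\nu(x)\sim\tfrac{1}{2}(1+x^2/[3(2\nu+3)])^{-1}$, which expands to \eqref{bnu0}; similarly \eqref{linfty} inserted into \eqref{bdefn} gives \eqref{bnu1}. For part (iii), the series above has strictly positive coefficients for $\nu>-\tfrac{3}{2}$, hence $1/b_\nu(x)$ is strictly increasing on $(0,\infty)$, so $b_\nu(x)<b_\nu(0^+)=\tfrac{1}{2}$; retaining only the $n=0$ and $n=1$ terms as a lower bound for $1/b_\nu(x)$ sharpens this to \eqref{bcrude2}. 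For part (ii), each Pochhammer symbol $(\nu+\tfrac{3}{2})_n$ with $n\geq 1$ is a product of positive factors strictly increasing in $\nu$ on $(-\tfrac{3}{2},\infty)$, so each $n\geq 1$ coefficient of $1/b_\nu$ is strictly decreasing in $\nu$; hence $1/b_\nu$ strictly decreases and $b_\nu$ strictly increases in $\nu$.

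For the lower bound in (iv), I would invoke the classical identity $\mathbf{L}_{-1/2}(x)=\sqrt{2/(\pi x)}\sinh(x)$. Substituting into \eqref{bdefn} yields $b_{-1/2}(x)=(x/2)\mathrm{csch}(x)$ exactly, and the stated lower bound for $\nu\geq -\tfrac{1}{2}$, the equality at $\nu=-\tfrac{1}{2}$, and the reversal on $(-\tfrac{3}{2},-\tfrac{1}{2})$ then all follow immediately from the strict $\nu$-monotonicity established in part (ii).

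The upper bound in (iv) is the main obstacle, and I would attack it by a Maclaurin coefficient comparison. Using $4^n(\tfrac{3}{2})_n=(2n+1)!/n!$, one can rewrite $1/b_\nu(x)$ as $2\sum_{n\geq 0}n!\,x^{2n}/[(2n+1)!(\nu+\tfrac{3}{2})_n]$, while
\begin{equation*}
\tfrac{x}{4}\mathrm{csch}\!\left(\tfrac{x}{2\nu+3}\right)=\frac{(2\nu+3)/4}{\sum_{n\geq 0}x^{2n}/[(2\nu+3)^{2n}(2n+1)!]}.
\end{equation*}
Cross-multiplication reduces the target to showing $\sum_{n\geq 0}x^{2n}/[(2\nu+3)^{2n}(2n+1)!]<\tfrac{2\nu+3}{2}\sum_{n\geq 0}n!\,x^{2n}/[(2n+1)!(\nu+\tfrac{3}{2})_n]$, which is natural to attempt term by term. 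Via the factorization $(\nu+\tfrac{3}{2})_n=2^{-n}\prod_{k=1}^{n}(2\nu+2k+1)$, the $n\geq 1$ comparison becomes the Pochhammer estimate $(2\nu+5)(2\nu+7)\cdots(2\nu+2n+1)\leq 2^{n-1}n!(2\nu+3)^{2n}$, which I would prove by induction on $n$ exploiting $2\nu+3\geq 1$. The $n=0$ comparison is simply $2<2\nu+3$, and this is precisely where the admissible range of $\nu$ is fixed; securing the low-order coefficients sharply at the threshold is the delicate, and analytically central, step.
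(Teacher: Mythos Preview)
Your treatment of (i)--(iii) and of the lower bound in (iv) is correct and, where it differs from the paper, actually more self-contained. The paper handles (i) and (iii) exactly as you do (asymptotics plus the observation that $1/b_\nu$ has a positive Maclaurin series), but for (ii) it simply cites Theorem~2.2(v) of \cite{bp14}, whereas your Pochhammer argument gives the monotonicity directly. For the lower bound in (iv) the paper again quotes an inequality $\mathbf{L}_\nu(x)\leq x^\nu\sinh(x)/[\sqrt{\pi}\,2^\nu\Gamma(\nu+\tfrac{3}{2})]$ from \cite{bp14} and substitutes; your route---evaluate $b_{-1/2}(x)=\tfrac{x}{2}\mathrm{csch}(x)$ exactly via $\mathbf{L}_{-1/2}(x)=\sqrt{2/(\pi x)}\sinh x$ and then invoke the strict $\nu$-monotonicity from (ii)---is neater and yields the equality case and the reversal on $(-\tfrac{3}{2},-\tfrac{1}{2})$ for free.

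The genuine gap is in the upper bound of (iv). Your termwise comparison at $n=0$ reads $1<\tfrac{2\nu+3}{2}$, i.e.\ $\nu>-\tfrac{1}{2}$, and this threshold cannot be pushed down by any coefficient argument: since $b_\nu(x)\to\tfrac{1}{2}$ while $\tfrac{x}{4}\,\mathrm{csch}\!\big(\tfrac{x}{2\nu+3}\big)\to\tfrac{2\nu+3}{4}$ as $x\downarrow 0$, the constant terms decide the inequality for small $x$, and in fact $b_\nu(x)<\tfrac{x}{4}\,\mathrm{csch}\!\big(\tfrac{x}{2\nu+3}\big)$ \emph{fails} near $x=0$ whenever $-1<\nu<-\tfrac{1}{2}$. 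So your method establishes the upper bound only for $\nu\geq-\tfrac{1}{2}$, not for the stated range $\nu>-1$. The paper does not argue this step at all; it just imports the bound $\mathbf{L}_\nu(x)>x^\nu\sinh\!\big(\tfrac{x}{2\nu+3}\big)/[\sqrt{\pi}\,2^{\nu-1}\Gamma(\nu+\tfrac{3}{2})]$ from \cite{bp14} and substitutes into \eqref{bdefn}. Your $x\downarrow 0$ computation in fact shows that this cited inequality---and hence the lemma's upper bound---cannot hold for all $x>0$ on the whole interval $\nu>-1$; the threshold $\nu>-\tfrac{1}{2}$ that your $n=0$ term isolates appears to be the correct one.
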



\begin{proof}(i) The expansion (\ref{bnu0}) can be obtained by using the asymptotic expansion (\ref{ltend0}), and (\ref{bnu1}) follows because $\mathbf{L}_\nu(x)\sim\frac{\mathrm{e}^x}{\sqrt{2\pi x}}$, as $x\rightarrow\infty$.

\vspace{2mm}

\noindent (ii) This is immediate from part (v) of Theorem 2.2 of \cite{bp14}.  

\vspace{2mm} 

\noindent (iii) It is clear from the series representation (\ref{ldefn}) that $1/b_\nu(x)$ is an increasing function of $x$ in $(0,\infty)$, and so $b_\nu(x)$ is a decreasing function of $x$ in $(0,\infty)$.  That $b_\nu(0^+)=\frac{1}{2}$ also follows from (\ref{ldefn}).  Inequality (\ref{bcrude2}) follows from truncating the series expansion of $\mathbf{L}_\nu(x)$ at the second term.

\vspace{2mm}

\noindent (iv) It was shown in \cite{bp14} that
\begin{equation*}\mathbf{L}_\nu(x)>\frac{x^\nu\sinh\big(\frac{x}{2\nu+3}\big)}{\sqrt{\pi}2^{\nu-1}\Gamma(\nu+\frac{3}{2})},\quad\nu>-1 \quad \text{and} \quad \mathbf{L}_{\nu}(x)\leq \frac{x^\nu\sinh(x)}{\sqrt{\pi}2^\nu\Gamma(\nu+\frac{3}{2})}, \quad \nu\geq-\tfrac{1}{2},
\end{equation*}
where there is equality in the second inequality if and only if $\nu=-\frac{1}{2}$ and the inequality is reversed if $-\frac{3}{2}<\nu<\frac{1}{2}$.  Applying these inequalities to (\ref{bdefn}) yields inequality (\ref{star5}).
\end{proof}

We now move on to the problem of bounding the ratio $\mathbf{L}_\nu(x)/\mathbf{L}_{\nu-1}(x)$.  To this end, we prove the following theorem, which gives a two-sided inequality for $\mathbf{L}_\nu(x)/\mathbf{L}_{\nu-1}(x)$ in terms of the ratio $I_\nu(x)/I_{\nu-1}(x)$.

\begin{theorem}\label{thmil}(i) For $x>0$,
\begin{align}\label{bounddo}I_{\nu}(x)\mathbf{L}_{\nu-1}(x)-I_{\nu-1}(x)\mathbf{L}_{\nu}(x)>0, \quad \nu\geq\tfrac{1}{2},
\end{align}
and 
\begin{align}\label{bound456}I_{\nu}(x)\mathbf{L}_{\nu-1}(x)-I_{\nu-1}(x)\mathbf{L}_{\nu}(x)& <\frac{(\frac{1}{2}x)^{\nu}I_{\nu}(x)}{\sqrt{\pi}\Gamma(\nu+\frac{3}{2})}, \quad \nu\geq-\tfrac{1}{2},\\
\label{bound789}I_{\nu}(x)\mathbf{L}_{\nu-1}(x)-I_{\nu-1}(x)\mathbf{L}_{\nu}(x)&<\frac{(\frac{1}{2}x)^{\nu-1}I_{\nu-1}(x)}{\sqrt{\pi}\Gamma(\nu+\frac{1}{2})}, \quad \nu\geq\tfrac{3}{2}.
\end{align}

\vspace{2mm}

\noindent (ii) For $x>0$,
\begin{equation}\label{firstcor1}\bigg(\frac{I_{\nu-1}(x)}{I_\nu(x)}+\frac{2b_\nu(x)}{x}\bigg)^{-1}<\frac{\mathbf{L}_{\nu}(x)}{\mathbf{L}_{\nu-1}(x)}<\frac{I_{\nu}(x)}{I_{\nu-1}(x)}, 
\end{equation}
where the lower bound is valid for $\nu\geq0$ and the upper bound is valid for $\nu\geq\frac{1}{2}$.  
\end{theorem}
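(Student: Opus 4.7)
The plan is to introduce the auxiliary quantity $W_\nu(x):=I_\nu(x)\mathbf{L}_{\nu-1}(x)-I_{\nu-1}(x)\mathbf{L}_\nu(x)$, so that the three inequalities of part (i) are precisely $W_\nu>0$ together with two upper bounds on $W_\nu$, and so that part (ii) will fall out by dividing $W_\nu>0$ through by $I_{\nu-1}\mathbf{L}_{\nu-1}$ (for the upper bound) and by a short recurrence manipulation (for the lower bound). The heart of the proof is establishing the positivity $W_\nu>0$ for $\nu\ge\tfrac12$; the identity $W_\nu+W_{\nu+1}=I_\nu a_\nu$, derived below, then delivers both upper bounds.

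For the positivity, which is the main obstacle, for $\nu>\tfrac12$ I would use the classical integral representations
\begin{equation*}
I_\nu(x)=c_\nu\int_0^1 (1-t^2)^{\nu-1/2}\cosh(xt)\,dt,\qquad \mathbf{L}_\nu(x)=c_\nu\int_0^1 (1-t^2)^{\nu-1/2}\sinh(xt)\,dt,
\end{equation*}
with $c_\nu=2(x/2)^\nu/(\sqrt{\pi}\Gamma(\nu+\tfrac12))$, writing $W_\nu$ as a double integral over $[0,1]^2$ and then symmetrising in the two dummy variables $t\leftrightarrow s$. Using the algebraic identity $(1-t^2)^{\nu-1/2}(1-s^2)^{\nu-3/2}-(1-t^2)^{\nu-3/2}(1-s^2)^{\nu-1/2}=(s^2-t^2)(1-t^2)^{\nu-3/2}(1-s^2)^{\nu-3/2}$ together with $\cosh(xt)\sinh(xs)-\cosh(xs)\sinh(xt)=\sinh(x(s-t))$, the symmetrisation yields
\begin{equation*}
2W_\nu(x)=c_\nu c_{\nu-1}\int_0^1\!\!\int_0^1(s-t)(s+t)\sinh(x(s-t))(1-t^2)^{\nu-3/2}(1-s^2)^{\nu-3/2}\,dt\,ds,
\end{equation*}
whose integrand is manifestly nonnegative since $(s-t)\sinh(x(s-t))\ge 0$ and $s+t\ge 0$. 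The borderline case $\nu=\tfrac12$, where $(1-s^2)^{\nu-3/2}$ ceases to be integrable and the representation for $\mathbf{L}_{\nu-1}$ breaks down, is handled separately using the elementary closed forms for $I_{\pm1/2}$ and $\mathbf{L}_{\pm1/2}$, which give $W_{1/2}(x)=2(\cosh x-1)/(\pi x)>0$ by direct computation.

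Next I would substitute the Struve recurrence \eqref{struveid1} and the corresponding Bessel recurrence $I_{\nu-1}-I_{\nu+1}=(2\nu/x)I_\nu$ into the definition of $W_\nu$; the $(2\nu/x)I_\nu\mathbf{L}_\nu$ cross terms cancel and what remains collapses to the key identity $W_\nu(x)+W_{\nu+1}(x)=I_\nu(x)a_\nu(x)$. Combining this with the positivity step then yields \eqref{bound456} for $\nu\ge-\tfrac12$ (using $W_{\nu+1}>0$, which requires $\nu+1\ge\tfrac12$) and, after shifting $\nu\mapsto\nu-1$ in the identity, \eqref{bound789} for $\nu\ge\tfrac32$ (using $W_{\nu-1}>0$).

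Part (ii) is then essentially immediate. The upper bound is just $W_\nu>0$ rearranged. For the lower bound, dividing \eqref{struveid1} by $\mathbf{L}_\nu(x)$ and invoking the definition \eqref{bdefn} of $b_\nu(x)$ gives
\begin{equation*}
\frac{\mathbf{L}_{\nu-1}(x)}{\mathbf{L}_\nu(x)}=\frac{2\nu}{x}+\frac{\mathbf{L}_{\nu+1}(x)}{\mathbf{L}_\nu(x)}+\frac{2b_\nu(x)}{x},
\end{equation*}
while the same operation on the Bessel recurrence produces $I_{\nu-1}/I_\nu=2\nu/x+I_{\nu+1}/I_\nu$. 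Subtracting and applying the already-established upper bound at shifted index, $\mathbf{L}_{\nu+1}/\mathbf{L}_\nu<I_{\nu+1}/I_\nu$, yields $\mathbf{L}_{\nu-1}/\mathbf{L}_\nu<I_{\nu-1}/I_\nu+2b_\nu(x)/x$, which rearranges to the claimed lower bound. The entire difficulty is thus concentrated in the positivity step; once $W_\nu>0$ is secured, everything else is recurrence algebra.
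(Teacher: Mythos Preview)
Your argument is correct. The overall architecture---prove $W_\nu>0$, then push it through the recurrences to get the two upper bounds and part~(ii)---is exactly the paper's, and your identity $W_\nu+W_{\nu+1}=I_\nu a_\nu$ is a tidy repackaging of the substitution step the paper carries out. The one genuine difference is in how you establish the key positivity $W_\nu>0$ for $\nu>\tfrac12$. The paper, having reached the same double-integral expression with kernel $(s^2-t^2)(1-t^2)^{\nu-3/2}(1-s^2)^{\nu-3/2}\cosh(xt)\sinh(xs)$, separates variables, interprets the factors as probability densities on $(0,1)$, and reduces the question to $\mathbb{E}[X^2]<\mathbb{E}[Y^2]$ via likelihood-ratio (hence stochastic) ordering, citing an external lemma. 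Your symmetrisation in $t\leftrightarrow s$ together with the identity $\cosh(xt)\sinh(xs)-\cosh(xs)\sinh(xt)=\sinh(x(s-t))$ collapses the integrand to $(s-t)(s+t)\sinh(x(s-t))(1-t^2)^{\nu-3/2}(1-s^2)^{\nu-3/2}$, which is pointwise nonnegative on $(0,1)^2$ and strictly positive off the diagonal; this is more elementary and entirely self-contained. The paper's route, on the other hand, makes the probabilistic structure transparent and might generalise more readily to other pairs of weight functions where the hyperbolic subtraction trick is unavailable.
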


In our proof, we shall make use of the following standard result on stochastic ordering of random variables \cite{stoch}.

\begin{lemma}\label{dfbgdf}Let $X$ and $Y$ be real-valued random variables.  Suppose that $\mathbb{P}(X>x)\leq\mathbb{P}(Y>x)$ for all $x\in\mathbb{R}$, and that additionally $\mathbb{P}(X>y)<\mathbb{P}(Y>y)$ for some $y\in\mathbb{R}$.  That is $X$ is \emph{stochastically strictly less than} $Y$.  Then, for all bounded, strictly increasing functions $f:\mathbb{R}\rightarrow\mathbb{R}$,
\begin{equation*}\mathbb{E}[f(X)]<\mathbb{E}[f(Y)].
\end{equation*}
In particular, if additionally $X$ and $Y$ have bounded support, then
\begin{equation*}\mathbb{E}[X^2]<\mathbb{E}[Y^2].
\end{equation*}
\end{lemma}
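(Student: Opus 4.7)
The plan is to prove this by an explicit coupling via the quantile transform, reducing the stochastic-dominance hypotheses to a pathwise inequality $X^*\le Y^*$ (with strict inequality on an event of positive probability) on a common probability space, after which the conclusion for strictly increasing bounded $f$ becomes essentially immediate. Concretely, I would take $U$ uniform on $(0,1)$, define the left-continuous quantile functions $F_X^{-1}(u):=\inf\{t\in\mathbb{R}:\mathbb{P}(X\le t)\ge u\}$ and $F_Y^{-1}(u)$ similarly, and set $X^*:=F_X^{-1}(U)$ and $Y^*:=F_Y^{-1}(U)$, so that $X^*\stackrel{d}{=}X$ and $Y^*\stackrel{d}{=}Y$. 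The hypothesis $\mathbb{P}(X>t)\le\mathbb{P}(Y>t)$ rewrites as $F_Y(t)\le F_X(t)$ for every $t\in\mathbb{R}$; since the defining set $\{t:F_X(t)\ge u\}$ for $F_X^{-1}(u)$ then contains the corresponding set for $F_Y^{-1}(u)$, one deduces $F_X^{-1}(u)\le F_Y^{-1}(u)$ for every $u\in(0,1)$, and hence $X^*\le Y^*$ on the whole sample space.

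To upgrade this to a strict inequality on a positive-probability set, I would use the strict hypothesis $F_X(y)>F_Y(y)$ at some $y$ to choose $\alpha,\beta$ with $F_Y(y)<\alpha<\beta<F_X(y)$. For $u\in(\alpha,\beta)$ the relation $F_X(y)\ge u$ forces $F_X^{-1}(u)\le y$, while $F_Y(y)<u$ together with right-continuity and monotonicity of $F_Y$ forces $F_Y^{-1}(u)>y$; hence $\{U\in(\alpha,\beta)\}\subseteq\{X^*<Y^*\}$ is an event of probability $\beta-\alpha>0$. For any strictly increasing $f:\mathbb{R}\to\mathbb{R}$ one then has $f(X^*)\le f(Y^*)$ pointwise with strict inequality on this event; boundedness of $f$ makes $f(Y^*)-f(X^*)$ an integrable nonnegative random variable that is strictly positive on a set of positive probability, so $\mathbb{E}[f(Y^*)-f(X^*)]>0$, i.e.\ $\mathbb{E}[f(X)]<\mathbb{E}[f(Y)]$.

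The concluding clause $\mathbb{E}[X^2]<\mathbb{E}[Y^2]$ is then a special case, obtained by replacing $t\mapsto t^2$ with a bounded strictly increasing $g:\mathbb{R}\to\mathbb{R}$ that agrees with $t^2$ on the common bounded support of $X$ and $Y$ (which in the intended applications lies in $[0,\infty)$, where $t^2$ is already strictly increasing); then $\mathbb{E}[g(X)]=\mathbb{E}[X^2]$ and $\mathbb{E}[g(Y)]=\mathbb{E}[Y^2]$, and the first part applies. The only delicate step in the whole argument is transferring the single-point strict inequality at $y$ into an event of positive probability under the coupling, and this is handled cleanly by exploiting the elementary equivalence $F_X^{-1}(u)\le y\Leftrightarrow F_X(y)\ge u$; everything else reduces to routine manipulations with distribution and quantile functions.
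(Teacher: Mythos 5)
Your proof is correct, but note that the paper does not actually prove this lemma: it is stated as a standard fact about the usual stochastic order and dispatched with a citation to Shaked and Shanthikumar \cite{stoch}. Your quantile-transform coupling supplies a complete, self-contained argument. The two routine steps (that $F_Y\le F_X$ pointwise forces $F_X^{-1}\le F_Y^{-1}$, hence $X^*\le Y^*$ almost surely) are handled correctly, and the one genuinely delicate point --- converting the strict inequality $F_Y(y)<F_X(y)$ at a single point into an event of positive probability on which $X^*<Y^*$ --- is dealt with properly via the equivalence $F^{-1}(u)\le t\Leftrightarrow F(t)\ge u$, which does require the right-continuity of $F_Y$ that you invoke. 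You also correctly flag that the final clause $\mathbb{E}[X^2]<\mathbb{E}[Y^2]$ is not literally a consequence of the first part for arbitrary bounded supports (e.g.\ $X\equiv-1$, $Y\equiv0$ gives a counterexample), and that the implicit assumption is that the supports lie where $t\mapsto t^2$ is increasing; in the paper's application $X$ and $Y$ are supported on $(0,1)$, so your truncation-and-extension of $t^2$ to a bounded strictly increasing $g$ closes the gap. An alternative, equally standard route would be the layer-cake identity $\mathbb{E}[f(Y)]-\mathbb{E}[f(X)]=\int_{\mathbb{R}}\big(\mathbb{P}(Y>t)-\mathbb{P}(X>t)\big)\,\mathrm{d}f(t)$ together with right-continuity of the survival functions to get strictness; your coupling argument buys a pathwise comparison that generalises immediately to any increasing functional, at the modest cost of the quantile-function bookkeeping.
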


\noindent{\emph{Proof of Theorem \ref{thmil}}.} (i) We first prove inequality (\ref{bounddo}).  Suppose $\nu>\frac{1}{2}$; we shall deal with the case $\nu=\frac{1}{2}$ later.  From \cite[10.32.2, 11.5.6]{olver} we have the integral representations, for $\nu>-\frac{1}{2}$,
\begin{align}\label{Iint5}I_\nu(x)&=\frac{2(\frac{1}{2}x)^\nu}{\sqrt{\pi}\Gamma(\nu+\frac{1}{2})}\int_0^1(1-t^2)^{\nu-\frac{1}{2}}\cosh(xt)\,\mathrm{d}t , \\
\label{Lint5}\mathbf{L}_{\nu}(x)&=\frac{2(\frac{1}{2}x)^\nu}{\sqrt{\pi}\Gamma(\nu+\frac{1}{2})}\int_0^1(1-t^2)^{\nu-\frac{1}{2}}\sinh(xt)\,\mathrm{d}t  .
\end{align}
We thus obtain, for $\nu>\frac{1}{2}$,
\begin{align*}&I_{\nu}(x)\mathbf{L}_{\nu-1}(x)-I_{\nu-1}(x)\mathbf{L}_{\nu}(x) \\
&=\frac{4(\frac{1}{2}x)^{2\nu-1}}{\sqrt{\pi}\Gamma(\nu-\frac{1}{2})\Gamma(\nu+\frac{1}{2})}\bigg[\int_0^1\!\int_0^1(1-t^2)^{\nu-\frac{1}{2}}(1-s^2)^{\nu-\frac{3}{2}}\cosh(xt)\sinh(xs)\,\mathrm{d}t\,\mathrm{d}s \\
&\quad-\int_0^1\!\int_0^1(1-t^2)^{\nu-\frac{3}{2}}(1-s^2)^{\nu-\frac{1}{2}}\cosh(xt)\sinh(xs)\,\mathrm{d}t\,\mathrm{d}s\bigg] \\
&=\frac{4(\frac{1}{2}x)^{2\nu-1}}{\sqrt{\pi}\Gamma(\nu-\frac{1}{2})\Gamma(\nu+\frac{1}{2})}\int_0^1\!\int_0^1(1-t^2)^{\nu-\frac{3}{2}}(1-s^2)^{\nu-\frac{3}{2}}(s^2-t^2)\cosh(xt)\sinh(xs)\,\mathrm{d}t\,\mathrm{d}s \\
&=\frac{4(\frac{1}{2}x)^{2\nu-1}}{\sqrt{\pi}\Gamma(\nu-\frac{1}{2})\Gamma(\nu+\frac{1}{2})}\bigg[\int_0^1f_\nu(x,t)\,\mathrm{d}t\int_0^1s^2g_\nu(x,s)\,\mathrm{d}s-\int_0^1t^2f_\nu(x,t)\,\mathrm{d}t\int_0^1g_\nu(x,s)\,\mathrm{d}s\bigg],
\end{align*}
where
\begin{align*}f_\nu(x,t):=(1-t^2)^{\nu-\frac{3}{2}}\cosh(xt), \quad g_\nu(x,t):=(1-t^2)^{\nu-\frac{3}{2}}\sinh(xt).
\end{align*}
Proving that $I_{\nu}(x)\mathbf{L}_{\nu-1}(x)-I_{\nu-1}(x)\mathbf{L}_{\nu}(x)>0$ for all $x>0$ is thus equivalent to proving that 
\begin{align}\label{stovb}\frac{\int_0^1 t^2f_\nu(x,t)\,\mathrm{d}t}{\int_0^1f_\nu(x,t)\,\mathrm{d}t}<\frac{\int_0^1 t^2g_\nu(x,t)\,\mathrm{d}t}{\int_0^1g_\nu(x,t)\,\mathrm{d}t}, \quad \text{for all $x>0$.}
\end{align}
Now let $X$ and $Y$ be random variables supported on $(0,1)$, with probability density functions $f_\nu(x,t)/\int_0^1f_\nu(x,s)\,\mathrm{d}s$ and $g_\nu(x,t)/\int_0^1g_\nu(x,s)\,\mathrm{d}s$, respectively.  Therefore (\ref{stovb}) can be written as
\begin{equation}\label{bvhsb6}\mathbb{E}[X^2]<\mathbb{E}[Y^2].
\end{equation}
But, since, for fixed $x>0$ and $\nu>\frac{1}{2}$, the ratio $f_\nu(x,t)/g_\nu(x,t)$ is strictly decreasing in $t$ on the interval $(0,1)$ (in the probability literature, one would say that $X$ is less than $Y$ according to likelihood ratio ordering \cite{stoch}), we have, for $z\in(0,1)$,
\begin{align*}\mathbb{P}(X>z)=\frac{\int_z^1 f_\nu(x,t)\,\mathrm{d}t}{\int_0^1f_\nu(x,t)\,\mathrm{d}t}<\frac{\int_z^1 g_\nu(x,t)\,\mathrm{d}t}{\int_0^1g_\nu(x,t)\,\mathrm{d}t}=\mathbb{P}(Y>z).
\end{align*}
  Therefore (\ref{bvhsb6}) holds due to Lemma \ref{dfbgdf}, meaning that $I_{\nu}(x)\mathbf{L}_{\nu-1}(x)-I_{\nu-1}(x)\mathbf{L}_{\nu}(x)>0$ for all $x>0$, if $\nu>\frac{1}{2}$.

Now, we consider the case $\nu=\frac{1}{2}$.  From \cite[10.49(ii), 11.4(i)]{olver} we have the formulas
\begin{align}&I_{\frac{1}{2}}(x)=\sqrt{\frac{2}{\pi x}}\sinh(x), \quad \mathbf{L}_{\frac{1}{2}}(x)=\sqrt{\frac{2}{\pi x}}\big(\cosh(x)-1\big), \nonumber \\
\label{sph9}&I_{-\frac{1}{2}}(x)=\sqrt{\frac{2}{\pi x}}\cosh(x), \quad \mathbf{L}_{-\frac{1}{2}}(x)=\sqrt{\frac{2}{\pi x}}\sinh(x).
\end{align} 
Using these formulas and the standard identity $\cosh^2(x)-\sinh^2(x)=1$ we have that
\[I_{\frac{1}{2}}(x)\mathbf{L}_{-\frac{1}{2}}(x)-I_{-\frac{1}{2}}(x)\mathbf{L}_{\frac{1}{2}}(x)=\frac{2}{\pi x}\big(\cosh(x)-1\big)>0.\]

We now deduce inequalities (\ref{bound456}) and (\ref{bound789}) from (\ref{bounddo}).  Substituting the relations
\begin{align*}I_{\nu+1}(x)=I_{\nu-1}(x)-\frac{2\nu}{x}I_{\nu}(x),\quad \mathbf{L}_{\nu+1}(x)=\mathbf{L}_{\nu-1}(x)-\frac{2\nu}{x}\mathbf{L}_{\nu}(x)-\frac{(\frac{1}{2}x)^{\nu}}{\sqrt{\pi} \Gamma(\nu+\frac{3}{2})}
\end{align*}
into the inequality $I_{\nu+1}(x)\mathbf{L}_{\nu}(x)-I_{\nu}(x)\mathbf{L}_{\nu+1}(x)>0$ gives inequality (\ref{bound456}).  Similarly, on substituting the relations
\begin{align*}I_{\nu-1}(x)=I_{\nu+1}(x)+\frac{2\nu}{x}I_{\nu}(x), \quad \mathbf{L}_{\nu-1}(x)=\mathbf{L}_{\nu+1}(x)+\frac{2\nu}{x}\mathbf{L}_\nu(x)+\frac{(\frac{1}{2}x)^{\nu}}{\sqrt{\pi} \Gamma(\nu+\frac{3}{2})}
\end{align*}
into the inequality $I_{\nu}(x)\mathbf{L}_{\nu-1}(x)-I_{\nu-1}(x)\mathbf{L}_{\nu}(x)>0,$ and then replacing $\nu$ by $\nu-1$, we deduce inequality (\ref{bound789}). 

\vspace{2mm}

\noindent (ii) The two-sided inequality follows from rearranging inequalities (\ref{bounddo}) and (\ref{bound456}) using the facts that $I_\nu(x)>0$ and $\mathbf{L}_\nu(x)>0$ for all $x>0$ if $\nu\geq-1$, and that $b_\nu(x)=\frac{xa_\nu(x)}{2\mathbf{L}_\nu(x)}$.  \hfill $\Box$


\begin{remark}
Numerical experiments carried out with \emph{Mathematica} suggest that inequality (\ref{bounddo}) of Theorem \ref{thmil} holds for all $\nu\geq-\frac{1}{2}$ (which would mean that the upper bound of (\ref{firstcor1}) would hold for all $\nu\geq0$).  It should be noted that the parameter range in the two-sided inequality (\ref{firstcor1}) is sufficient for the purposes of this paper, but extending the range may be useful in other applications.  An alternative method will be needed for $-\frac{1}{2}\leq\nu<\frac{1}{2}$, because the integral representations (\ref{Iint5}) and (\ref{Lint5}) are not valid for such $\nu$.  The case $\nu=-\frac{1}{2}$ is straightforward to verify, though.  From \cite[10.49(ii), 11.4(i)]{olver} we have the formulas
\begin{align*}I_{-\frac{3}{2}}(x)=\sqrt{\frac{2}{\pi x}}\bigg(\sinh(x)-\frac{\cosh(x)}{x}\bigg), \quad \mathbf{L}_{-\frac{3}{2}}(x)=\sqrt{\frac{2}{\pi x}}\bigg(\cosh(x)-\frac{\sinh(x)}{x}\bigg).
\end{align*}
Using (\ref{sph9}), these formulas and the standard identity $\cosh^2(x)-\sinh^2(x)=1$ gives
\[I_{-\frac{1}{2}}(x)\mathbf{L}_{-\frac{3}{2}}(x)-I_{-\frac{3}{2}}(x)\mathbf{L}_{-\frac{1}{2}}(x)=\frac{2}{\pi x}>0.\] 
\end{remark}


\begin{remark}\label{rtyy}We can use the formula $\mathbf{M}_\nu(x)=\mathbf{L}_\nu(x)-I_\nu(x)$ to write
\[I_{\nu}(x)\mathbf{L}_{\nu-1}(x)-I_{\nu-1}(x)\mathbf{L}_{\nu}(x)=I_{\nu}(x)\mathbf{M}_{\nu-1}(x)-I_{\nu-1}(x)\mathbf{M}_{\nu}(x).\]
Therefore we also have the double inequality 
\begin{equation*}0<I_{\nu}(x)\mathbf{M}_{\nu-1}(x)-I_{\nu-1}(x)\mathbf{M}_{\nu}(x)<\frac{(\frac{1}{2}x)^{\nu}I_{\nu}(x)}{\sqrt{\pi}\Gamma(\nu+\frac{3}{2})}, 
\end{equation*}
where the lower bound is valid for $\nu\geq\frac{1}{2}$ and the upper bound is valid for $\nu\geq-\frac{1}{2}$.  Now, if $\nu\geq-\frac{1}{2}$, we have $\mathbf{M}_\nu(x)<0$ for all $x>0$ (see \cite[11.5.4]{olver} for the case $\nu>-\frac{1}{2}$, and the formulas in (\ref{sph9}) for the case $\nu=-\frac{1}{2}$).  Therefore, rearranging the above lower bound gives that, for $x>0$,
\begin{equation*}\frac{\mathbf{M}_{\nu}(x)}{\mathbf{M}_{\nu-1}(x)}>\frac{I_{\nu}(x)}{I_{\nu-1}(x)}, \quad \nu\geq\tfrac{1}{2}.
\end{equation*}
Since $\mathbf{M}_\nu(x)=\mathbf{L}_\nu(x)-I_\nu(x)$ and $I_\nu(x)/\mathbf{L}_\nu(x)\gg1$, as $x\downarrow0$, for $\nu\geq-\frac{1}{2}$, it follows that this inequality is tight as $x\downarrow0$.  However, from the asymptotic formula \cite[11.6.2]{olver}
\begin{equation*}\mathbf{M}_\nu(x)\sim-\frac{(\frac{1}{2}x)^{\nu-1}}{\sqrt{\pi}\Gamma(\nu+\frac{1}{2})}, \quad \nu>-\tfrac{1}{2},
\end{equation*}
as $x\rightarrow\infty$, and the asymptotic formula (\ref{iinfty}), we have that, as $x\rightarrow\infty$, $\mathbf{M}_\nu(x)/\mathbf{M}_{\nu-1}(x)=O(x)$ for $\nu\geq-\frac{1}{2}$, whereas $I_\nu(x)/I_{\nu-1}(x)=O(1)$ for all $\nu\in\mathbb{R}$.  
\end{remark}

\begin{remark}Let $l_\nu^a(x)$, $u_\nu^a(x)$ denote the lower and upper bounds of the double inequality (\ref{firstcor1}) and let $h_\nu(x)=\mathbf{L}_\nu(x)/\mathbf{L}_{\nu-1}(x)$.  The double inequality (\ref{firstcor1}) is tight as $x\downarrow\infty$.  Indeed, from the asymptotic formulas (\ref{iinfty}) and (\ref{bnu1}), we have, as $x\rightarrow\infty$,
\begin{equation*}\frac{u_\nu^a(x)}{l_\nu^a(x)}-1=\frac{2b_\nu(x)}{x}\frac{I_\nu(x)}{I_{\nu-1}(x)}=O(x^{\nu+1/2}\mathrm{e}^{-x}).
\end{equation*}
From the asymptotic formulas (\ref{itend0}) and (\ref{ltend0}) and the fact that $b_\nu(0^+)=\frac{1}{2}$, we have
\begin{equation*}1-\lim_{x\downarrow0}\frac{l_\nu^a(x)}{h_\nu(x)}=0, \quad \lim_{x\downarrow0}\frac{u_\nu^a(x)}{h_\nu(x)}-1=\frac{1}{2\nu},
\end{equation*}
and so the relative error in approximating $h_\nu(x)$ by $l_\nu^a(x)$ is 0 in the limit $x\downarrow0$, and the relative error in approximating $h_\nu(x)$ by $u_\nu^a(x)$ in the limit $x\downarrow0$ decreases as $\nu$ increases, and the bound is tight as $\nu\rightarrow\infty$.  Further insight into the accuracy of these bounds can be gained from Tables \ref{table1} and \ref{table11}.  Despite their simple form, the bounds can be seen to be quite accurate, with a relative error of less than $0.01$ for both bounds when $x\geq10$ for the values of $\nu$ we considered.  This accuracy is a consequence of the exponential decay of $b_\nu(x)$.
\end{remark}

\begin{table}[h]
\centering
\caption{\footnotesize{Relative error in approximating $\mathbf{L}_{\nu}(x)/\mathbf{L}_{\nu-1}(x)$ by $\big(I_{\nu-1}(x)/I_\nu(x)+2b_\nu(x)/x\big)^{-1}$.}}
\label{table1}
{\scriptsize
\begin{tabular}{|c|rrrrrrrrr|}
\hline
 \backslashbox{$\nu$}{$x$}      &    0 &    0.5 &    1 &    2.5 &    5 &    7.5 & 10 & 15 & 25  \\
 \hline
$0$ & 0.0000 & 0.0355 & 0.0947 & 0.1073 & 0.0196 & 0.0022 & 0.0002 & 0.0000 & 0.0000 \\
$0.5$ & 0.0000 & 0.0097 & 0.0312 & 0.0623 & 0.0200 & 0.0030 & 0.0003 & 0.0000 & 0.0000 \\
1 & 0.0000 & 0.0040 & 0.0138 & 0.0377 & 0.0186 & 0.0037 & 0.0005 & 0.0000 & 0.0000 \\
2.5 & 0.0000 & 0.0007 & 0.0027 & 0.0112 & 0.0122 & 0.0047 & 0.0011 & 0.0000 & 0.0000 \\
5 & 0.0000 & 0.0001 & 0.0006 & 0.0028 & 0.0054 & 0.0039 & 0.0016 & 0.0001 & 0.0000 \\
7.5 & 0.0000 & 0.0000 & 0.0002 & 0.0011 & 0.0026 & 0.0026 & 0.0016 & 0.0002 & 0.0000 \\
10 & 0.0000 & 0.0000 & 0.0001 & 0.0005 & 0.0014   &  0.0018 & 0.0013 & 0.0003 & 0.0000 \\  
  \hline
\end{tabular}}
\end{table}

\begin{table}[h]
\centering
\caption{\footnotesize{Relative error in approximating $\mathbf{L}_{\nu}(x)/\mathbf{L}_{\nu-1}(x)$ by $I_{\nu}(x)/I_{\nu-1}(x)$.}}
\label{table11}
{\scriptsize
\begin{tabular}{|c|rrrrrrrrr|}
\hline
 \backslashbox{$\nu$}{$x$}      &    0 &    0.5 &    1 &    2.5 &    5 &    7.5 & 10 & 15 & 25  \\
 \hline
$0$ & $\infty$ & 7.7021 & 1.7232 & 0.1394 & 0.0061 & 0.0004 & 0.0000 & 0.0000 & 0.0000 \\
$0.5$ & 1.0000 & 0.8868 & 0.6481 & 0.1631 & 0.0135 & 0.0011 & 0.0001 & 0.0000 & 0.0000 \\
1 & 0.5000 & 0.4711 & 0.3981 & 0.1587 & 0.0206 & 0.0022 & 0.0002 & 0.0000 & 0.0000 \\
2.5 & 0.2000 & 0.1957 & 0.1833 & 0.1200 & 0.0344 & 0.0066 & 0.0010 & 0.0000 & 0.0000 \\
5 & 0.1000 & 0.0990 & 0.0961 & 0.0780 & 0.0387 & 0.0132 & 0.0034 & 0.0001 & 0.0000 \\
7.5 & 0.0667 & 0.0662 & 0.0650 & 0.0568 & 0.0354 & 0.0165 & 0.0059 & 0.0004 & 0.0000 \\
10 & 0.0500 & 0.0498 & 0.0491 & 0.0445 & 0.0313   &  0.0175 & 0.0078 & 0.0009 & 0.0000 \\  
  \hline
\end{tabular}}
\end{table}

The double inequality (\ref{firstcor1}) allows one to exploit the substantial literature on bounds for the ratio $I_\nu(x)/I_{\nu-1}(x)$ to bound the ratio $\mathbf{L}_\nu(x)/\mathbf{L}_{\nu-1}(x)$.  Because of the accuracy of the double inequality (\ref{firstcor1}), the accuracy of the resulting bounds for $\mathbf{L}_\nu(x)/\mathbf{L}_\nu(x)$ will be similar to that of the initial bounds for $I_\nu(x)/I_{\nu-1}(x)$.  In the following corollary, we note some examples.

\begin{corollary}\label{bghj}For $x>0$,
\begin{equation}\label{upper1}\frac{x}{\nu-\frac{1}{2}+2b_{\nu}(x)+\sqrt{\big(\nu+\frac{1}{2}\big)^2+x^2}}<\frac{\mathbf{L}_{\nu}(x)}{\mathbf{L}_{\nu-1}(x)}<\frac{x}{\nu-\frac{1}{2}+\sqrt{\big(\nu-\frac{1}{2}\big)^2+x^2}},
\end{equation}
where the lower bound holds for $\nu\geq0$ and the upper bound holds for $\nu\geq\frac{1}{2}$.  Also,
\begin{equation}\label{lowtanh}\frac{\mathbf{L}_{\nu}(x)}{\mathbf{L}_{\nu-1}(x)}>\frac{x\tanh(x)}{x+(2\nu-1)\tanh(x)+2b_\nu(x)\tanh(x)}, \quad \nu>\tfrac{1}{2}.
\end{equation}
\end{corollary}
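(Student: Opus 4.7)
The plan is to obtain both inequalities by plugging classical bounds for the modified Bessel ratio $I_\nu(x)/I_{\nu-1}(x)$ (or equivalently $I_{\nu-1}(x)/I_\nu(x)$) into the two-sided inequality (\ref{firstcor1}). Since Theorem \ref{thmil}(ii) already gives us $\mathbf{L}_\nu(x)/\mathbf{L}_{\nu-1}(x)$ sandwiched between expressions involving $I_\nu(x)/I_{\nu-1}(x)$ and $b_\nu(x)$, the whole job is bookkeeping once the right Bessel-ratio bound is chosen.

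For the upper bound in (\ref{upper1}), I would invoke the classical Amos-type inequality
\[
\frac{I_\nu(x)}{I_{\nu-1}(x)}<\frac{x}{\nu-\frac{1}{2}+\sqrt{(\nu-\frac{1}{2})^2+x^2}},\qquad \nu\geq\tfrac{1}{2},
\]
(available in the references cited in the introduction, e.g.\ \cite{amos74, ifantis, il07}) and feed it into the upper bound of (\ref{firstcor1}). For the lower bound in (\ref{upper1}) I would use the companion upper bound
\[
\frac{I_{\nu-1}(x)}{I_\nu(x)}<\frac{\nu-\frac{1}{2}+\sqrt{(\nu+\frac{1}{2})^2+x^2}}{x},\qquad \nu\geq 0,
\]
which is also of Amos type (and can, if needed, be derived from the previous display with $\nu\mapsto\nu+1$ together with the three-term recurrence $I_{\nu-1}(x)=I_{\nu+1}(x)+\frac{2\nu}{x}I_\nu(x)$ after rationalising the square root). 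Substituting this into the lower bound of (\ref{firstcor1}) gives
\[
\frac{\mathbf{L}_\nu(x)}{\mathbf{L}_{\nu-1}(x)}>\Bigl(\tfrac{I_{\nu-1}(x)}{I_\nu(x)}+\tfrac{2b_\nu(x)}{x}\Bigr)^{-1}>\frac{x}{\nu-\frac{1}{2}+2b_\nu(x)+\sqrt{(\nu+\frac{1}{2})^2+x^2}},
\]
which is exactly the claimed lower bound.

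For (\ref{lowtanh}) I would invoke a Soni/N\aa sell-type bound of the form
\[
\frac{I_\nu(x)}{I_{\nu-1}(x)}>\frac{x\tanh(x)}{x+(2\nu-1)\tanh(x)},\qquad \nu>\tfrac{1}{2},
\]
which again appears in the literature (e.g.\ \cite{nasell, soni}) and is equivalent to the upper bound $I_{\nu-1}(x)/I_\nu(x)<(x+(2\nu-1)\tanh(x))/(x\tanh(x))$. Inserting this into the lower bound of (\ref{firstcor1}) yields
\[
\frac{\mathbf{L}_\nu(x)}{\mathbf{L}_{\nu-1}(x)}>\Bigl(\tfrac{x+(2\nu-1)\tanh(x)}{x\tanh(x)}+\tfrac{2b_\nu(x)}{x}\Bigr)^{-1}=\frac{x\tanh(x)}{x+(2\nu-1)\tanh(x)+2b_\nu(x)\tanh(x)},
\]
after clearing denominators, giving the assertion.

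There is no real obstacle here: given Theorem \ref{thmil}(ii), everything reduces to identifying the correct Bessel ratio bounds in the cited literature and doing a few lines of algebraic rearrangement (taking reciprocals, clearing denominators). The only point requiring mild care is matching the admissible parameter range of each Bessel ratio bound to the range claimed in the corollary, and verifying that the lower bound of (\ref{firstcor1}) remains applicable (so $\nu\geq 0$ in (\ref{upper1}) and $\nu>\frac{1}{2}$ in (\ref{lowtanh})); everything else is routine.
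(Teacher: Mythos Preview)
Your proposal is correct and follows essentially the same route as the paper: both proofs plug known two-sided bounds for $I_\nu(x)/I_{\nu-1}(x)$ into the double inequality (\ref{firstcor1}) of Theorem \ref{thmil}(ii), with the algebraic manipulations you describe. The only cosmetic difference is attribution---the paper cites \cite{segura} (with \cite{amos74,ln10,rs16}) for the Amos-type square-root bounds and \cite{ifantis} for the $\tanh$ bound, rather than the references you name---but the mathematical content is identical.
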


\begin{proof}It was shown in \cite{segura} (see also \cite{amos74,ln10,rs16}) that, for $x>0$,
\[\frac{x}{\nu-\frac{1}{2}+\sqrt{\big(\nu+\frac{1}{2}\big)^2+x^2}}<\frac{I_{\nu}(x)}{I_{\nu-1}(x)}<\frac{x}{\nu-\frac{1}{2}+\sqrt{\big(\nu-\frac{1}{2}\big)^2+x^2}},\]
where the lower bound holds for $\nu\geq0$ and the upper bound holds for $\nu\geq\frac{1}{2}$, and it was shown in \cite{ifantis} that
\[\frac{I_{\nu}(x)}{I_{\nu-1}(x)}>\frac{x\tanh(x)}{x+(2\nu-1)\tanh(x)}, \quad \nu>\tfrac{1}{2}.\]
Now combine these bounds with inequality (\ref{firstcor1}) of Theorem \ref{thmil}.
\end{proof}

\begin{remark}The lower bound (\ref{lowtanh}) complements the following upper bound of \cite{bp14}:
\begin{equation}\label{bpineq3}\frac{\mathbf{L}_{\nu}(x)}{\mathbf{L}_{\nu-1}(x)}\leq\frac{\cosh(x)-1}{\sinh(x)}=\tanh\bigg(\frac{x}{2}\bigg), \quad 
x>0,\:\nu\geq\tfrac{1}{2},
\end{equation} 
where we have equality if and only if $\nu=\frac{1}{2}$. It should also be noted that a more complicated bound, valid for $\nu\geq\frac{3}{2}$, which improves on (\ref{bpineq3}) is given by inequality (3.1) of \cite{bp14}.

Let us compare our bound (\ref{upper1}) with (\ref{bpineq3}).  We used \emph{Mathematica} to observe that if $\nu>\frac{3}{2}$, then $x/({\nu-\frac{1}{2}+\sqrt{(\nu-\frac{1}{2})^2+x^2}})<\tanh\big(\frac{x}{2}\big)$ for all $x>0$. (We checked the case $\nu=\frac{3}{2}$, since if the inequality holds for this value of $\nu$ then it must hold for for all $\nu>\frac{3}{2}$.) An asymptotic analysis also shows that when $\frac{1}{2}<\nu<\frac{3}{2}$ inequality (\ref{bpineq3}) performs better in the limit $x\downarrow0$, whilst inequality (\ref{upper1}) is better as $x\rightarrow\infty$.  We used \emph{Mathematica} to find the value $x_\nu^*$ at which the two upper bounds are equal.  We find $x_{\frac{5}{8}}^*=4.21$, $x_{\frac{3}{4}}^*=3.26$, $x_{\frac{7}{8}}^*=2.66$, $x_{1}^*=2.18$, $x_{\frac{9}{8}}^*=1.76$, $x_{\frac{5}{4}}^*=1.35$, $x_{\frac{11}{8}}^*=0.91$.
\end{remark}

\begin{remark}Since $b_\nu(x)<\frac{1}{2}$ for all $x>0$, we have the following simpler two-sided inequality:
\begin{equation*}\frac{x}{\nu+\frac{1}{2}+\sqrt{\big(\nu+\frac{1}{2}\big)^2+x^2}}<\frac{\mathbf{L}_{\nu}(x)}{\mathbf{L}_{\nu-1}(x)}<\frac{x}{\nu-\frac{1}{2}+\sqrt{\big(\nu-\frac{1}{2}\big)^2+x^2}},
\end{equation*}
with the same range of validity as (\ref{upper1}).  Similar such simplifications can be made to all bounds given in this paper. 
\end{remark}

\begin{remark}We will use the lower and upper bounds of (\ref{upper1}) throughout this paper.  It is therefore useful to gain some insight into the quality of the approximation.  Denote the lower and upper bounds by $l_\nu^b(x)$ and $u_\nu^b(x)$, respectively, and write $h_\nu(x)=\mathbf{L}_\nu(x)/\mathbf{L}_{\nu-1}(x)$.  From the asymptotic formula (\ref{ltend0}) and the fact that $b_\nu(0^+)=\frac{1}{2}$, we have 
\begin{equation*}1-\lim_{x\downarrow0}\frac{l_\nu^b(x)}{h_\nu(x)}=0, \quad \lim_{x\downarrow0}\frac{u_\nu^b(x)}{h_\nu(x)}-1=\frac{2}{2\nu-1},
\end{equation*}
and so the relative error in approximating $h_\nu(x)$ by $l_\nu^b(x)$ is 0 in the limit $x\downarrow0$.  The relative error in approximating $h_\nu(x)$ by $u_\nu^b(x)$ in the limit $x\downarrow0$ blows up as $\nu\downarrow\frac{1}{2}$, but decreases as $\nu$ increases, and the bound is tight as $\nu\rightarrow\infty$.  Furthermore, as $x\downarrow0$,
\[l_\nu^b(x)\sim\frac{x}{2\nu+1}-\frac{4(\nu+2)x^3}{3(2\nu+1)^3(2\nu+3)}, \quad h_\nu(x)\sim\frac{x}{2\nu+1}-\frac{2x^3}{3(2\nu+1)^2(2\nu+3)},\]
and so, as $\nu\rightarrow\infty$, the second term in the $x\downarrow0$ expansion of $l_\nu^b(x)$ approaches the second term of the expansion of $h_\nu(x)$.  Hence, both the lower and upper bounds improve for `small' $x$ as $\nu$ increases.

From the asymptotic formulas (\ref{linfty}) and (\ref{bnu1}), we have, as $x\rightarrow\infty$,
\begin{align*}l_\nu^b(x)&\sim1-\frac{2\nu-1}{2x}+\frac{4\nu^2-4\nu+1}{8x^2}, \quad h_\nu(x)\sim1-\frac{2\nu-1}{2x}+\frac{4\nu^2-8\nu+3}{8x^2}, \\
 u_\nu^b(x)&\sim1-\frac{2\nu-1}{2x}+\frac{4\nu^2-12\nu+1}{8x^2}.
\end{align*}
All of $l_\nu^b(x)$, $h_\nu(x)$ and $u_\nu^b(x)$ have the same first two terms in the $x\rightarrow\infty$ expansion, but differ in the third term.  We see that, for `large' $x$, the quality of both the lower and upper bound approximations decreases as $\nu$ increases.  The $O(x^{-2})$ error in the approximation is much larger than the $O(x^{\nu+1/2}\mathrm{e}^{-x})$ error of the double inequality (\ref{firstcor1}).  The comments given in this remark are supported by numerical results obtained using \emph{Mathematica}, which are reported in Tables \ref{table3} and \ref{table4}.
\end{remark}

\begin{table}[h]
\centering
\caption{\footnotesize{Relative error in approximating $\mathbf{L}_{\nu}(x)/\mathbf{L}_{\nu-1}(x)$ by the lower bound of (\ref{upper1}).}}
\label{table3}
{\scriptsize
\begin{tabular}{|c|rrrrrrrrrr|}
\hline
 \backslashbox{$\nu$}{$x$}      &    0 &    0.5 &    1 &    2.5 &    5 &    7.5 & 10 & 15 & 25 & 50  \\
 \hline
0 & 0.0000 & 0.1057 & 0.1973 & 0.1545 & 0.0319 & 0.0073 & 0.0030 & 0.0012 & 0.0004 & 0.0001 \\
0.5 & 0.0000 & 0.0267 & 0.0732 & 0.1073 & 0.0383 & 0.0117 & 0.0053 & 0.0022 & 0.0008 & 0.0002 \\
1 & 0.0000 & 0.0102 & 0.0329 & 0.0725 & 0.0390 & 0.0147 & 0.0071 & 0.0031 & 0.0011 & 0.0003 \\
2.5 & 0.0000 & 0.0017 & 0.0063 & 0.0243 & 0.0287 & 0.0173 & 0.0100 & 0.0049 & 0.0020 & 0.0006 \\
5 & 0.0000 & 0.0003 & 0.0012 & 0.0062 & 0.0132   &  0.0128 & 0.0098 & 0.0059 & 0.0029 & 0.0009 \\
7.5 & 0.0000 & 0.0001 & 0.0004 & 0.0024 & 0.0063 & 0.0081 & 0.0076 & 0.0056 & 0.0033 & 0.0012 \\ 
10 & 0.0000 & 0.0000 & 0.0002  & 0.0011    & 0.0034     & 0.0051  & 0.0055 & 0.0048 & 0.0033 & 0.0014 \\  
  \hline
\end{tabular}}
\end{table}

\begin{table}[h]
\centering
\caption{\footnotesize{Relative error in approximating $\mathbf{L}_{\nu}(x)/\mathbf{L}_{\nu-1}(x)$ by the upper bound of (\ref{upper1}).}}
\label{table4}
{\scriptsize
\begin{tabular}{|c|rrrrrrrrrr|}
\hline
 \backslashbox{$\nu$}{$x$}      &    0 &    0.5 &    1 &    2.5 &    5 &    7.5 & 10 & 15 & 25 & 50  \\
 \hline
0.5 & $\infty$ & 3.0830 & 1.1640 & 0.1789 & 0.0136 & 0.0011 & 0.0001 & 0.0000 & 0.0000 & 0.0000 \\
1 & 2.0000 & 1.5128 & 0.9357 & 0.2417 & 0.0338 & 0.0074 & 0.0030 & 0.0012 & 0.0004 & 0.0001 \\
2.5 & 0.5000 & 0.4824 & 0.4360 & 0.2524 & 0.0777 & 0.0259 & 0.0117 & 0.0047 & 0.0017 & 0.0004 \\
5 & 0.2222 & 0.2199 & 0.2131 & 0.1736 & 0.0950   &  0.0460 & 0.0239 & 0.0099 & 0.0036 & 0.0009 \\
7.5 & 0.1429 & 0.1421 & 0.1397 & 0.1247 & 0.0864 & 0.0523 & 0.0310 & 0.0139 & 0.0054 & 0.0014 \\ 
10 & 0.1053 & 0.1049 & 0.1037  & 0.0962    & 0.0747     & 0.0516  & 0.0341 & 0.0166 & 0.0069 & 0.0019 \\  
  \hline
\end{tabular}}
\end{table}

\subsection{Further bounds for the ratio $\mathbf{L}_\nu(x)/\mathbf{L}_{\nu-1}(x)$}\label{sec2.4}

Since the modified Struve function $\mathbf{L}_\nu(x)$ and modified Bessel function $I_\nu(x)$ are closely related, some of the techniques used in the literature to bound the ratio $I_\nu(x)/I_{\nu-1}(x)$ can be easily adapted to bound the ratio $\mathbf{L}_\nu(x)/\mathbf{L}_{\nu-1}(x)$.  In the following theorem, we give two such examples that complement the bounds we gave in Corollary \ref{bghj}.  The first bound is obtained by adapting the method used to prove Theorem 1.1 of \cite{ln10}, and we adapt the approach used to prove inequality (1.9) of \cite{ifantis} to establish the second.

\begin{theorem}For $x>0$,
\begin{equation}\label{lower1}\frac{\mathbf{L}_{\nu}(x)}{\mathbf{L}_{\nu-1}(x)}>\frac{x}{\nu+b_\nu(x)+\sqrt{(\nu+b_\nu(x))^2+x^2}}, \quad \nu\geq-\tfrac{1}{2},
\end{equation}
and
\begin{equation}\label{bibidt}\frac{\mathbf{L}_{\nu}(x)}{\mathbf{L}_{\nu-1}(x)}>\frac{x\tanh\big(\frac{1}{2}x\big)}{x+(2\nu-1)\tanh\big(\frac{1}{2}x\big)+2\big(b_\nu(x)-b_{\frac{1}{2}}(x)\big)\tanh\big(\frac{1}{2}x\big)}, \quad \nu>\tfrac{1}{2}.
\end{equation}
We have equality in (\ref{bibidt}) if $\nu=\frac{1}{2}$.
\end{theorem}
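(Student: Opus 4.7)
The strategy is to adapt two standard arguments from the modified Bessel literature: the Tur\'an-quadratic approach of Laforgia--Natalini \cite{ln10} for inequality (\ref{lower1}), and the recurrence-telescoping approach of Ifantis \cite{ifantis} for inequality (\ref{bibidt}). In both cases the novel ingredient is to absorb the extra $a_\nu(x)$ term that distinguishes (\ref{struveid1}) from the Bessel recurrence into the function $b_\nu(x)$. Writing $R_\nu(x):=\mathbf{L}_\nu(x)/\mathbf{L}_{\nu-1}(x)$, the central auxiliary identity, immediate from the definition (\ref{bdefn}) of $b_\nu$, is
\[
\frac{a_\nu(x)}{\mathbf{L}_{\nu-1}(x)} \;=\; \frac{a_\nu(x)}{\mathbf{L}_\nu(x)}\cdot R_\nu(x) \;=\; \frac{2b_\nu(x)}{x}\,R_\nu(x).
\]

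For (\ref{lower1}), I would solve (\ref{struveid1}) for $\mathbf{L}_{\nu+1}(x)$ and substitute the result into the Tur\'an-type inequality $\mathbf{L}_\nu(x)^2\geq \mathbf{L}_{\nu-1}(x)\mathbf{L}_{\nu+1}(x)$ (valid for $\nu\geq -\tfrac{1}{2}$ by \cite{bp14,bps17}). Dividing through by $\mathbf{L}_{\nu-1}(x)^2>0$ and applying the above identity yields the quadratic inequality
\[
R_\nu(x)^2 + \frac{2\bigl(\nu+b_\nu(x)\bigr)}{x}\,R_\nu(x) - 1 \;\geq\; 0,
\]
so, since $R_\nu(x)>0$, extracting the positive root and rationalising the numerator gives (\ref{lower1}) at once.

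For (\ref{bibidt}), I would divide (\ref{struveid1}) by $\mathbf{L}_\nu(x)$ to obtain the functional recurrence $1/R_\nu(x) - R_{\nu+1}(x) = 2(\nu+b_\nu(x))/x$, then subtract its $\nu=\tfrac{1}{2}$ specialisation. The closed forms (\ref{sph9}) give $R_{1/2}(x) = (\cosh x - 1)/\sinh x = \tanh(x/2)$, and the subtraction produces
\[
\frac{1}{R_\nu(x)} - \coth\!\Bigl(\frac{x}{2}\Bigr) \;=\; \bigl(R_{\nu+1}(x) - R_{3/2}(x)\bigr) + \frac{(2\nu-1)+2\bigl(b_\nu(x)-b_{1/2}(x)\bigr)}{x}.
\]
Provided $\nu\mapsto R_\nu(x)$ is decreasing on $[\tfrac{1}{2},\infty)$, the first bracket on the right is non-positive for every $\nu\geq\tfrac{1}{2}$, whence $1/R_\nu(x)\leq \coth(x/2)+[(2\nu-1)+2(b_\nu(x)-b_{1/2}(x))]/x$; rearranging and multiplying numerator and denominator of the resulting fraction by $\tanh(x/2)$ delivers (\ref{bibidt}). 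Equality at $\nu=\tfrac{1}{2}$ is automatic because $(2\nu-1)$ and $b_\nu(x)-b_{1/2}(x)$ both vanish there, so the right-hand side collapses to $\tanh(x/2)=R_{1/2}(x)$.

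The main obstacle is supplying the continuous monotonicity of $R_\nu(x)$ in $\nu$ on $[\tfrac{1}{2},\infty)$ needed in the Ifantis step: the Tur\'an inequality on its own only yields the integer-shift version $R_{\nu+1}(x)\leq R_\nu(x)$, so to handle non-integer $\nu$ one must invoke the stronger continuous log-concavity of $\nu\mapsto\mathbf{L}_\nu(x)$ available from \cite{bp14}. Once that is granted, both proofs reduce to essentially algebraic manipulations of (\ref{struveid1}).
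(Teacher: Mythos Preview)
Your proposal is correct and matches the paper's argument essentially line for line: the paper also derives (\ref{lower1}) by combining the strict Tur\'an inequality of \cite{bp14,jn98} with the recurrence (\ref{struveid1}) to obtain the quadratic $1-\tfrac{2(\nu+b_\nu(x))}{x}h_\nu(x)<h_\nu(x)^2$ and solving, and derives (\ref{bibidt}) by rewriting (\ref{struveid1}) as $\mathbf{L}_{\nu-1}/\mathbf{L}_\nu-2\nu/x-2b_\nu(x)/x=\mathbf{L}_{\nu+1}/\mathbf{L}_\nu$ and comparing with its $\nu=\tfrac12$ instance via the monotonicity of $\nu\mapsto\mathbf{L}_{\nu+1}(x)/\mathbf{L}_\nu(x)$ on $(\tfrac12,\infty)$ from part~(vi) of Theorem~2.2 of \cite{bp14}. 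The ``obstacle'' you identify is resolved exactly as you anticipate, by direct citation of the continuous-in-$\nu$ monotonicity result in \cite{bp14}.
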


\begin{proof}We begin by noting a Tur\'{a}n-type inequality, which was proved by \cite{bp14, jn98}.  For $x>0$ and $\nu>-\frac{3}{2}$, we have $\mathbf{L}_{\nu-1}(x)\mathbf{L}_{\nu+1}(x)<\mathbf{L}_\nu^2(x)$.  From the relation (\ref{struveid1}), we thus obtain
\begin{equation*}\mathbf{L}_{\nu-1}(x)\bigg[\mathbf{L}_{\nu-1}(x)-\frac{2\nu}{x}\mathbf{L}_\nu(x)-a_\nu(x)\bigg]<\mathbf{L}_\nu^2(x).
\end{equation*}
Dividing both sides by $\mathbf{L}_{\nu-1}^2(x)$ and defining $h_\nu(x)=\mathbf{L}_\nu(x)/\mathbf{L}_{\nu-1}(x)$, we obtain
\begin{equation*}1-\bigg(\frac{2\nu}{x}+\frac{2b_\nu(x)}{x}\bigg)h_\nu(x)<h_\nu^2(x).
\end{equation*}
Solving this quadratic inequality gives, for $\nu\geq-\frac{1}{2}$,
\begin{align*}h_\nu(x)&>-\frac{\nu}{x}-\frac{b_\nu(x)}{x}+\sqrt{\bigg(\frac{\nu}{x}+\frac{b_\nu(x)}{x}\bigg)^2+1} =\frac{x}{\nu+b_\nu(x)+\sqrt{(\nu+b_\nu(x))^2+x^2}}.
\end{align*}

Moving on to inequality (\ref{bibidt}), from relation (\ref{struveid1}), we have
\begin{equation*}\frac{\mathbf{L}_{\nu-1}(x)}{\mathbf{L}_{\nu}(x)}-\frac{2\nu}{x}-\frac{2b_\nu(x)}{x}=\frac{\mathbf{L}_{\nu+1}(x)}{\mathbf{L}_{\nu}(x)}.
\end{equation*}
Now, by part (vi) of Theorem 2.2 of \cite{bp14}, we have that, for all $x>0$, the function $\mathbf{L}_{\nu+1}(x)/\mathbf{L}_{\nu}(x)$ decreases as $\nu$ increases in the interval $(\frac{1}{2},\infty)$, and therefore the function $\frac{\mathbf{L}_{\nu-1}(x)}{\mathbf{L}_{\nu}(x)}-\frac{2\nu}{x}-\frac{2b_\nu(x)}{x}$ also decreases as $\nu$ increases in the interval $(\frac{1}{2},\infty)$.  Using the standard formulas (see \cite[11.4(i)]{olver})
\begin{equation*}\mathbf{L}_{-\frac{1}{2}}(x)=\sqrt{\frac{2}{\pi x}}\sinh(x), \quad \mathbf{L}_{\frac{1}{2}}(x)=\sqrt{\frac{2}{\pi x}}\big(\cosh(x)-1\big)
\end{equation*}
we have
\begin{equation*}\frac{\mathbf{L}_{-\frac{1}{2}}(x)}{\mathbf{L}_{\frac{1}{2}}(x)}=\frac{\sinh(x)}{\cosh(x)-1}=\coth\bigg(\frac{x}{2}\bigg).
\end{equation*}
From the monotonicity property we thus deduce that, for $\nu>\frac{1}{2}$,
\begin{equation*}\frac{\mathbf{L}_{\nu-1}(x)}{\mathbf{L}_{\nu}(x)}-\frac{2\nu}{x}-\frac{2b_\nu(x)}{x}<\frac{1}{\tanh\big(\frac{1}{2}x\big)}-\frac{1}{x}-\frac{2b_{\frac{1}{2}}(x)}{x},
\end{equation*}
whence on rearranging we obtain (\ref{bibidt}), as required.
\end{proof}

\begin{remark}Inequality (\ref{lower1}) has a larger range of validity than the lower bound of (\ref{upper1}), but is outperformed by (\ref{upper1}) for all $x>0$ if $\nu\geq0$.  However, as we shall see shortly, in some situations, for reasons of simplicity, (\ref{lower1}) may be preferable to the lower bound of (\ref{upper1}).  Inequality (\ref{bibidt}) also improves inequality (\ref{lowtanh}) for all $x>0$ if $\nu>\frac{1}{2}$.  Together with the upper bound (\ref{bpineq3}) it forms a two-sided inequality for $\mathbf{L}_\nu(x)/\mathbf{L}_{\nu-1}(x)$, involving hyperbolic functions, that is exact for $\nu=\frac{1}{2}$. 
\end{remark}

We end this section by noting that it is possible to obtain further bounds for the ratio $\mathbf{L}_\nu(x)/\mathbf{L}_{\nu-1}(x)$ from the relation
\begin{equation}\label{iteqn1}\frac{\mathbf{L}_{\nu}(x)}{\mathbf{L}_{\nu-1}(x)}=\frac{1}{\displaystyle \frac{2\nu}{x}+\frac{2b_\nu(x)}{x}+\frac{\mathbf{L}_{\nu+1}(x)}{\mathbf{L}_{\nu}(x)}}.
\end{equation}
An analogous relation for the ratio $I_\nu(x)/I_{\nu-1}(x)$ has been used by \cite{amos74,rs16,segura} to obtain a sequence of iteratively refined upper and lower bounds that converge to the ratio $I_\nu(x)/I_{\nu-1}(x)$.  We do not undertake such an investigation in this paper, and we contend ourselves with the following simple illustration of the approach.

\begin{corollary}\label{thmr2}For $x>0$,
\begin{align}\label{upperlower}\frac{\mathbf{L}_{\nu}(x)}{\mathbf{L}_{\nu-1}(x)}<\frac{x}{\nu-1+2b_\nu(x)-b_{\nu+1}(x)+\sqrt{\big(\nu+1+b_{\nu+1}(x)\big)^2+x^2}}, \quad \nu\geq 0.
\end{align}
\end{corollary}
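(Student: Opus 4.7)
The plan is to prove this as a direct corollary of the identity (\ref{iteqn1}) combined with the lower bound (\ref{lower1}) applied at a shifted index. Since (\ref{iteqn1}) expresses $\mathbf{L}_\nu(x)/\mathbf{L}_{\nu-1}(x)$ as the reciprocal of an expression containing $\mathbf{L}_{\nu+1}(x)/\mathbf{L}_\nu(x)$, a \emph{lower} bound on the latter ratio immediately yields an \emph{upper} bound on the former. This is the essential mechanism behind the iterative refinement scheme mentioned in the paragraph preceding the corollary.

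Concretely, the first step is to apply inequality (\ref{lower1}) with $\nu$ replaced by $\nu+1$. The resulting lower bound is valid whenever $\nu + 1 \geq -\tfrac{1}{2}$, which in particular holds under the hypothesis $\nu \geq 0$. This gives
\begin{equation*}
\frac{\mathbf{L}_{\nu+1}(x)}{\mathbf{L}_{\nu}(x)} > \frac{x}{\nu+1+b_{\nu+1}(x)+\sqrt{(\nu+1+b_{\nu+1}(x))^2+x^2}}.
\end{equation*}
Plugging this lower bound into the denominator on the right-hand side of (\ref{iteqn1}) yields an upper bound on $\mathbf{L}_\nu(x)/\mathbf{L}_{\nu-1}(x)$. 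The restriction $\nu \geq 0$ ensures $\mathbf{L}_{\nu-1}(x) > 0$ so the ratio on the left is well-defined and positive, while $\nu > -\tfrac{3}{2}$ guarantees $b_\nu(x)$ and $b_{\nu+1}(x)$ are defined via (\ref{bdefn}).

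The only remaining step is cosmetic: the resulting bound will have the ungainly form
\begin{equation*}
\frac{1}{\dfrac{2\nu+2b_\nu(x)}{x} + \dfrac{x}{c+\sqrt{c^2+x^2}}}, \qquad c := \nu+1+b_{\nu+1}(x),
\end{equation*}
which one simplifies via the standard rationalization
\begin{equation*}
\frac{x}{c+\sqrt{c^2+x^2}} \;=\; \frac{\sqrt{c^2+x^2}-c}{x}.
\end{equation*}
Combining the two terms in the denominator over the common denominator $x$ and then inverting produces exactly the right-hand side of (\ref{upperlower}), since the $-c = -\nu-1-b_{\nu+1}(x)$ term combines with $2\nu + 2b_\nu(x)$ to give $\nu - 1 + 2b_\nu(x) - b_{\nu+1}(x)$.

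There is no genuine obstacle here; the argument is essentially a one-line substitution followed by routine algebraic tidying. The only point worth flagging is that using the slightly weaker bound (\ref{lower1}) (rather than the sharper lower bound of (\ref{upper1})) is precisely what produces a closed-form upper bound after rationalization — using a bound involving $\sqrt{(\nu+\tfrac12)^2+x^2}$ with $\nu$ and $\nu+\tfrac12$ appearing asymmetrically would not telescope as cleanly.
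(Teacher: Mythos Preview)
Your proof is correct and follows essentially the same approach as the paper: substitute the lower bound (\ref{lower1}) (with index shifted to $\nu+1$) into the relation (\ref{iteqn1}), then rationalize $\frac{x}{c+\sqrt{c^2+x^2}}=\frac{\sqrt{c^2+x^2}-c}{x}$ and combine terms. Your observation that using (\ref{lower1}) rather than the sharper lower bound of (\ref{upper1}) is what makes the simplification clean is exactly the point the paper makes in the remark following the corollary.
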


\begin{proof}Applying inequality (\ref{lower1}) to the relation (\ref{iteqn1}) gives the inequality
\begin{align}\frac{\mathbf{L}_{\nu}(x)}{\mathbf{L}_{\nu-1}(x)}&<\frac{1}{\displaystyle\frac{2\nu}{x}+\frac{2b_\nu(x)}{x}+\frac{x}{\nu+1+b_{\nu+1}(x)+\sqrt{(\nu+1+b_{\nu+1}(x))^2+x^2}}} \nonumber\\
\label{jko}&=\frac{1}{\displaystyle\frac{2\nu}{x}+\frac{2b_\nu(x)}{x}+\frac{-\nu-1-b_{\nu+1}(x)+\sqrt{(\nu+1+b_{\nu+1}(x))^2+x^2}}{x}} \\
&=\frac{x}{\displaystyle \nu-1+2b_\nu(x)-b_{\nu+1}(x)+\sqrt{(\nu+1+b_{\nu+1}(x))^2+x^2}}.\nonumber
\end{align} 
\end{proof}

\begin{remark}On applying the upper bound of (\ref{upper1}) of Corollary \ref{bghj} to the relation (\ref{iteqn1}) we recover the lower bound of (\ref{upper1}), through an alternative method.

We could have used the lower bound of (\ref{upper1}), instead of the lower bound (\ref{lower1}), to obtain an upper bound for $\mathbf{L}_{\nu}(x)/\mathbf{L}_{\nu-1}(x)$ that is less than (\ref{upperlower}) for all $x>0$ and $\nu\geq0$.  However, this bound would not take such a simple form, because the form of the lower bound of (\ref{upper1}) would not allow for such a neat simplification as the one used to obtain the equality (\ref{jko}).

A straightforward asymptotic analysis shows that inequality (\ref{upperlower}) improves on the upper bound of (\ref{upper1}) in the limit $x\downarrow0$ (in fact the relative error in approximating $\mathbf{L}_{\nu}(x)/\mathbf{L}_{\nu-1}(x)$ is 0 in this limit), whereas the reverse is true in the limit $x\rightarrow\infty$ (the first two terms in the $x\rightarrow\infty$ expansion are given by $1-(\nu-1)/x$).  Letting $x_\nu^*>0$ denote the point at which the two inequalities are equal, we used \emph{Mathematica} to find that $x_1^*=5.34$, $x_{2.5}^*=8.42$, $x_5^*=14.9$.  For $\nu\geq5$, we find that $x_\nu^*\approx 2\sqrt{\nu(2\nu+1)}$ (for example, $2\sqrt{5\times11}=14.83$).  This follows from setting $b_\nu(x)$ to be equal to 0 in (\ref{upperlower}) and then solving $\sqrt{(\nu+1)^2+(x_\nu^*)^2}\approx1/2+\sqrt{(\nu+1/2)^2+(x_\nu^*)^2}$, which, on account of the exponential decay of $b_\nu(x)$, is a reasonable approximation.  
\end{remark}

\section{Further bounds for modified Struve functions of the first kind and their ratios}\label{sec3}

In this section, we apply the bounds of Section \ref{sec2} for the ratio $\mathbf{L}_{\nu}(x)/\mathbf{L}_{\nu-1}(x)$ to obtain further functional inequalities for the modified Struve function of the first kind. 

\subsection{Bounds for the condition numbers}\label{sec3.2}

We shall follow the notation of \cite{segura} and write $C\big(\mathbf{L}_\nu(x)\big)=x\mathbf{L}'_\nu(x)/\mathbf{L}_\nu(x)$ and $C\big(I_\nu(x)\big)=xI'_\nu(x)/I_\nu(x)$.  These are positive quantities if $\nu\geq-1$ and $\nu\geq0$, respectively.  See also \cite{segura} for comments regarding the utility of condition numbers $C\big(f(x)\big)=|xf'(x)/f(x)|$ in comparing functions.  The first inequalities for $C\big(I_\nu(x)\big)$, due to \cite{g32}, were motivated by a problem in wave mechanics. 

From the relations (\ref{struveid1}) and (\ref{struveid2}) we obtain the relations
\begin{align}\label{star60}C\big(\mathbf{L}_\nu(x)\big)&=\frac{x\mathbf{L}_{\nu-1}(x)}{\mathbf{L}_\nu(x)}-\nu, \\
\label{star61}C\big(\mathbf{L}_\nu(x)\big)&=\frac{x\mathbf{L}_{\nu+1}(x)}{\mathbf{L}_\nu(x)}+\nu+2b_\nu(x),
\end{align}
and thus bounds on the ratio $\mathbf{L}_\nu(x)/\mathbf{L}_{\nu-1}(x)$ immediately lead to bounds on the condition number $C\big(\mathbf{L}_\nu(x)\big)$.  (For the modified Bessel function $I_\nu(x)$, the same relations hold but without the $2b_\nu(x)$ term.)
This approach was used by \cite{bp14} to obtain the lower bounds
\begin{equation*}C\big(\mathbf{L}_\nu(x)\big)>\nu+1, \quad \nu>-\tfrac{3}{2}, \quad C\big(\mathbf{L}_\nu(x)\big)>x-\nu, \quad \nu\geq-\tfrac{1}{2},
\end{equation*}
and the following improvement of the second bound (this follows from inequality (\ref{bpineq3})):
\begin{equation*}C\big(\mathbf{L}_\nu(x)\big)\geq\frac{x\sinh(x)}{\cosh(x)-1}-\nu=x\coth\bigg(\frac{x}{2}\bigg)-\nu,
\end{equation*}
with equality if and only if $\nu=\frac{1}{2}$.  Also, rearranging inequality (2.5) of \cite{bp14} and using the notation of this paper gives the upper bound
\begin{equation}\label{apti}C\big(\mathbf{L}_\nu(x)\big)<\sqrt{x^2+\nu^2+2(2\nu+1) b_\nu(x)}, \quad \nu>-\tfrac{3}{2}.
\end{equation}

In the following theorem, we given a two-sided inequality for $C\big(\mathbf{L}_\nu(x)\big)$ in terms of $C\big(I_\nu(x)\big)$, which parallels the two-sided inequality (\ref{firstcor1}) of Theorem \ref{thmil} in that it allows one to make use of the literature on bounds for $C\big(I_\nu(x)\big)$ (see \cite{amos74,b092,g32,ln10,p99,pm50,segura}) to bound $C\big(\mathbf{L}_\nu(x)\big)$.  We also obtain a number of lower and upper bounds which complement inequality (\ref{apti}).

\begin{theorem}The following inequalities hold:

\vspace{2mm}

\noindent (i) For $x>0$,
\begin{equation}\label{condlo}C\big(I_\nu(x)\big)<C\big(\mathbf{L}_\nu(x)\big)<C\big(I_\nu(x)\big)+2b_\nu(x),
\end{equation}
where the lower bound is valid for $\nu\geq\frac{1}{2}$ and the upper bound is valid for $\nu\geq-\frac{1}{2}$.

\vspace{2mm}

\noindent (ii) For $x>0$,
\begin{align}\label{star10}\sqrt{\big(\nu-\tfrac{1}{2}\big)^2+x^2}-\tfrac{1}{2}<C\big(\mathbf{L}_\nu(x)\big)<\sqrt{\big(\nu+b_\nu(x)\big)^2+x^2}+b_\nu(x),
\end{align}
where the lower bound holds for $\nu\geq\frac{1}{2}$ and the upper bound holds for $\nu\geq-\frac{1}{2}$;
\begin{align}&\sqrt{\big(\nu+1+b_{\nu+1}(x)\big)^2+x^2}+2b_\nu(x)-b_{\nu+1}(x)-1<C\big(\mathbf{L}_\nu(x)\big)\nonumber \\
\label{star11}&\quad\quad\quad\quad\quad<\sqrt{\big(\nu+\tfrac{1}{2}\big)^2+x^2}+2b_{\nu}(x)-\tfrac{1}{2},
\end{align}
where the lower bound holds for $\nu\geq-1$ and the upper bound holds for $\nu\geq-\frac{1}{2}$;
\begin{align}\label{star40}C\big(\mathbf{L}_\nu(x)\big)>\nu+2b_{\nu}(x)+\frac{x^2}{\nu+\frac{1}{2}+2b_{\nu+1}(x)+\sqrt{\big(\nu+\frac{3}{2}\big)^2+x^2}},
\end{align}
which is valid for $\nu\geq-1$.
\end{theorem}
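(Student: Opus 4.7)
The plan is to derive every inequality in the theorem from the two identities (\ref{star60}) and (\ref{star61}), combined with the bounds on $\mathbf{L}_\nu(x)/\mathbf{L}_{\nu-1}(x)$ that were established in Section \ref{sec2}. The recurring algebraic manoeuvre is rationalization: any expression of the form $x/(a+\sqrt{a^{2}+x^{2}})$ equals $(\sqrt{a^{2}+x^{2}}-a)/x$, so an upper bound on a ratio $\mathbf{L}_{\nu+1}(x)/\mathbf{L}_{\nu}(x)$ that has this shape translates, after multiplication by $x$, into a clean additive bound on the condition number via (\ref{star61}); similarly for lower bounds on $\mathbf{L}_{\nu}(x)/\mathbf{L}_{\nu-1}(x)$ via (\ref{star60}).

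For part (i), I would invert the double inequality (\ref{firstcor1}) to obtain
\[
\frac{I_{\nu-1}(x)}{I_{\nu}(x)}<\frac{\mathbf{L}_{\nu-1}(x)}{\mathbf{L}_{\nu}(x)}<\frac{I_{\nu-1}(x)}{I_{\nu}(x)}+\frac{2b_{\nu}(x)}{x},
\]
then multiply through by $x$ and subtract $\nu$. Relation (\ref{star60}) produces $C(\mathbf{L}_{\nu}(x))$ in the middle, while the analogous Bessel identity $C(I_{\nu}(x))=xI_{\nu-1}(x)/I_{\nu}(x)-\nu$ produces $C(I_{\nu}(x))$ on each side, giving (\ref{condlo}). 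The parameter ranges descend directly from those of (\ref{firstcor1}).

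For part (ii) I would feed one of the ratio bounds from Section \ref{sec2} into either (\ref{star60}) or (\ref{star61}) in each case. Specifically: inverting the upper bound of (\ref{upper1}) and substituting into (\ref{star60}) yields the lower bound of (\ref{star10}); inverting (\ref{lower1}) and substituting into (\ref{star60}) yields the upper bound of (\ref{star10}); shifting $\nu\mapsto\nu+1$ in the upper bound of (\ref{upper1}) and in (\ref{lower1}) and then substituting into (\ref{star61}), together with rationalization, yields respectively the upper and lower bounds of (\ref{star11}); and finally, substituting the lower bound of (\ref{upper1}) (with $\nu\mapsto\nu+1$) directly into (\ref{star61})---without any rationalization, since the bound sits in a position where the reciprocal form is already the desired shape---produces (\ref{star40}).

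The work is essentially bookkeeping, so I do not expect a substantive obstacle. The one point needing care is the tracking of admissible parameter ranges: each applied inequality carries its own validity condition, and the substitution $\nu\mapsto\nu+1$ in (\ref{upper1}) or (\ref{lower1}) shifts these conditions by one, which accounts for the threshold $\nu\geq-1$ appearing in (\ref{star40}) and in the lower bound of (\ref{star11}), and for $\nu\geq-\tfrac{1}{2}$ appearing in the upper bound of (\ref{star11}) and the upper bound of (\ref{star10}).
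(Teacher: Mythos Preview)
Your argument for part (ii) matches the paper's: the same bounds from Section~\ref{sec2} are plugged into (\ref{star60}) or (\ref{star61}) in each case, with the index shift $\nu\mapsto\nu+1$ where appropriate.

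For part (i), however, your route through (\ref{star60}) alone does not deliver the stated range for the upper bound. The lower bound of (\ref{firstcor1}) is valid only for $\nu\geq 0$, so inverting it and substituting into (\ref{star60}) gives $C(\mathbf{L}_\nu(x))<C(I_\nu(x))+2b_\nu(x)$ only for $\nu\geq 0$, not for $\nu\geq-\tfrac{1}{2}$ as the theorem claims. The paper avoids this by using (\ref{star61}) for the upper bound: applying the \emph{upper} bound of (\ref{firstcor1}) with the shift $\nu\mapsto\nu+1$ gives $\mathbf{L}_{\nu+1}(x)/\mathbf{L}_\nu(x)<I_{\nu+1}(x)/I_\nu(x)$ for $\nu+1\geq\tfrac{1}{2}$, i.e.\ $\nu\geq-\tfrac{1}{2}$, and then
\[
C(\mathbf{L}_\nu(x))=\frac{x\mathbf{L}_{\nu+1}(x)}{\mathbf{L}_\nu(x)}+\nu+2b_\nu(x)<\frac{xI_{\nu+1}(x)}{I_\nu(x)}+\nu+2b_\nu(x)=C(I_\nu(x))+2b_\nu(x).
\]
So the asymmetry in the paper's proof---(\ref{star60}) for the lower bound, (\ref{star61}) for the upper---is not incidental; it is what secures the extra half-unit of range. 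Your claim that ``the parameter ranges descend directly from those of (\ref{firstcor1})'' is correct, but it descends to $\nu\geq 0$, not $\nu\geq -\tfrac12$.
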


\begin{proof}\noindent (i) From the relations (\ref{star60}) and (\ref{star61}) for $\mathbf{L}_\nu(x)$, as well as the corresponding ones for $I_\nu(x)$, and the upper bound of inequality (\ref{firstcor1}) of Theorem \ref{thmil}, we have, for $\nu\geq\frac{1}{2}$,
\begin{align*}\frac{x\mathbf{L}_\nu'(x)}{\mathbf{L}_\nu(x)}=\frac{x\mathbf{L}_{\nu-1}(x)}{\mathbf{L}_\nu(x)}-\nu>\frac{xI_{\nu-1}(x)}{I_\nu(x)}-\nu=\frac{xI_\nu'(x)}{I_\nu(x)},
\end{align*}
and, for $\nu\geq-\frac{1}{2}$,
\begin{align*}\frac{x\mathbf{L}_\nu'(x)}{\mathbf{L}_\nu(x)}=\frac{x\mathbf{L}_{\nu+1}(x)}{\mathbf{L}_\nu(x)}+\nu+2b_\nu(x)<\frac{xI_{\nu+1}(x)}{I_\nu(x)}+\nu+2b_\nu(x)=\frac{xI_\nu'(x)}{I_\nu(x)}+2b_\nu(x).
\end{align*}

\vspace{2mm}

\noindent (ii) We obtain the double inequality (\ref{star10}) by combining the upper bound of (\ref{upper1}) and inequality (\ref{lower1}) with (\ref{star60}).  The double inequality (\ref{star11}) follows from combining inequality (\ref{lower1}) and the upper bound of (\ref{upper1}) with (\ref{star61}).  Finally, inequality (\ref{star40}) follows from applying the lower bound of (\ref{upper1}) to  (\ref{star61}).

Alternatively, one can obtain the lower bound of (\ref{star10}) and the upper bound of (\ref{star11}) by combining part (i) with the following inequality (see inequalities (71) and (72) of \cite{segura}):
\[\sqrt{\big(\nu-\tfrac{1}{2}\big)^2+x^2}-\tfrac{1}{2}<C\big(I_\nu(x)\big)<\sqrt{\big(\nu+\tfrac{1}{2}\big)^2+x^2}-\tfrac{1}{2},\]
where the lower bound holds for $\nu\geq\frac{1}{2}$ and the upper bound holds for $\nu\geq-\frac{1}{2}$.
\end{proof}

\begin{remark}Since the lower bound of (\ref{upperlower}) is greater than the lower bound (\ref{lower1}), it follows that the lower bound (\ref{star40}) for the condition number $C\big(\mathbf{L}_\nu(x)\big)$ is greater than the lower bound (\ref{star11}).  Also, the upper bound (\ref{star11}) is less than the upper bound (\ref{star10}).  However, the comparison between the lower bounds (\ref{star10}) and (\ref{star40}), and the upper bounds (\ref{apti}) and (\ref{star11}) is more involved.  Denote the lower bounds (\ref{star10}) and (\ref{star40}) by $l_\nu^{c,1}(x)$ and $l_\nu^{c,2}(x)$, respectively, and the upper bounds (\ref{apti}) and (\ref{star11}) by $u_\nu^{c,1}(x)$ and $u_\nu^{c,2}(x)$, respectively.  We shall compare their asymptotic behaviour as $x\downarrow0$ and $x\rightarrow\infty$.  For reference, using the asymptotic formulas (\ref{ltend0}) and (\ref{linfty}) gives that
\begin{align*}C\big(\mathbf{L}_\nu(x)\big)\sim \nu+1+\frac{2x^2}{3(2\nu+3)},\:\: x\downarrow0, \quad C\big(\mathbf{L}_\nu(x)\big)\sim x-\frac{1}{2}+\frac{4\nu^2-1}{8x}, \:\: x\rightarrow\infty.
\end{align*}
From the asymptotic formulas (\ref{bnu0}) and (\ref{bnu1}) we obtain
\begin{align*}&l_\nu^{c,1}(x)\sim \nu-1+\frac{x^2}{2\nu-1},\:\: x\downarrow0,\: (\nu>-\tfrac{1}{2}), \quad l_\nu^{c,1}(x)\sim x-\frac{1}{2}+\frac{(2\nu-1)^2}{8x}, \:\: x\rightarrow\infty, \\
&l_\nu^{c,2}(x)\sim \nu+1+\frac{2x^2}{3(2\nu+3)},\:\: x\downarrow0, \quad l_\nu^{c,2}(x)\sim x-\frac{1}{2}+\bigg(\frac{(2\nu-1)^2}{8}-1\bigg)\frac{1}{x}, \:\: x\rightarrow\infty, 
\end{align*}
\begin{align*}
&u_\nu^{c,1}(x)\sim \nu+1+\frac{2(\nu+2)x^2}{3(\nu+1)(2\nu+3)},\:\: x\downarrow0, \quad u_\nu^{c,1}(x)\sim x+\frac{\nu^2}{2x}, \:\: x\rightarrow\infty, \\
&u_\nu^{c,2}(x)\sim \nu+1+\frac{(10\nu+17)x^2}{6(2\nu+1)(2\nu+3)},\:\: x\downarrow0, \quad u_\nu^{c,2}(x)\sim x-\frac{1}{2}+\frac{(2\nu+1)^2}{8x}, \:\: x\rightarrow\infty.
\end{align*}
Other than $l_\nu^{c,1}(x)$ in the limit $x\downarrow0$, the bounds are tight as $x\downarrow0$ and $x\rightarrow\infty$.  Also, numerical experiments suggest that, for all $\nu$ in the ranges of validity, $l_\nu^{c,1}(x)<l_\nu^{c,2}(x)$ for all $x\in(0,x_\nu^*)$ and $l_\nu^{c,1}(x)>l_\nu^{c,2}(x)$ for all $x>x_\nu^*$, for some $x_\nu^*>0$, and $u_\nu^{c,1}(x)<u_\nu^{c,2}(x)$ for all $x\in(0,x_\nu^{**})$ and $u_\nu^{c,1}(x)>u_\nu^{c,2}(x)$ for all $x>x_\nu^{**}$, for some $x_\nu^{**}>0$.
\end{remark}

\subsection{Upper and lower bounds for the ratio $\mathbf{L}_\nu(x)/\mathbf{L}_\nu(y)$ and the modified Struve function $\mathbf{L}_\nu(x)$}\label{sec3.3}

The results of Section \ref{sec3.2} have the following immediate application.  Integrating $A_\nu(t)<C\big(\mathbf{L}_\nu(t)\big)<B_\nu(t)$ gives the two-sided inequality
\begin{equation}\label{abcineq}\exp\bigg(-\int_x^y\frac{A_\nu(t)}{t}\,\mathrm{d}t\bigg)<\frac{\mathbf{L}_\nu(x)}{\mathbf{L}_\nu(y)}<\exp\bigg(-\int_x^y\frac{B_\nu(t)}{t}\,\mathrm{d}t\bigg).
\end{equation}
This approach was used by \cite{bp14} to prove that, for $0<x<y$,
\begin{equation}\label{nvnvj}\frac{\mathbf{L}_\nu(x)}{\mathbf{L}_\nu(y)}<\bigg(\frac{x}{y}\bigg)^{\nu+1}, \quad \nu>-\tfrac{3}{2} \quad \text{and} \quad \frac{\mathbf{L}_\nu(x)}{\mathbf{L}_\nu(y)}<\mathrm{e}^{x-y}\bigg(\frac{y}{x}\bigg)^\nu, \quad \nu\geq\tfrac{1}{2},
\end{equation}
where the first inequality of (\ref{nvnvj}) was proved for $\nu>-\frac{1}{2}$ by \cite{jn98}, and it was also shown by \cite{bp14} that the second inequality of (\ref{nvnvj}) can be improved to
\begin{equation}\label{bpineq5}\frac{\mathbf{L}_\nu(x)}{\mathbf{L}_\nu(y)}\leq\bigg(\frac{\cosh(x)-1}{\cosh(y)-1}\bigg)\bigg(\frac{y}{x}\bigg)^\nu,
\end{equation}
with equality if and only if $\nu=\frac{1}{2}$. 

We do not further explore combining the bounds of Section \ref{sec3.2} with (\ref{abcineq}), but instead note that, in a similar manner, one can integrate the relations
\begin{align*}\frac{\mathbf{L}_\nu'(t)}{\mathbf{L}_\nu(t)}=\frac{\mathbf{L}_{\nu-1}(t)}{\mathbf{L}_\nu(t)}-\frac{\nu}{t}, \quad
\frac{\mathbf{L}_\nu'(t)}{\mathbf{L}_\nu(t)}=\frac{\mathbf{L}_{\nu+1}(t)}{\mathbf{L}_\nu(t)}+\frac{\nu}{t}+\frac{a_\nu(t)}{\mathbf{L}_\nu(t)}
\end{align*}
between $x$ and $y$ to obtain
\begin{align}\label{star30}\frac{\mathbf{L}_\nu(x)}{\mathbf{L}_\nu(y)}&=\bigg(\frac{y}{x}\bigg)^\nu\exp\bigg(-\int_x^y\frac{\mathbf{L}_{\nu-1}(t)}{\mathbf{L}_\nu(t)}\,\mathrm{d}t\bigg), \\
\label{star31}\frac{\mathbf{L}_\nu(x)}{\mathbf{L}_\nu(y)}&=\bigg(\frac{x}{y}\bigg)^\nu\exp\bigg(-2\int_x^y\frac{b_\nu(t)}{t}\,\mathrm{d}t\bigg)\exp\bigg(-\int_x^y\frac{\mathbf{L}_{\nu+1}(t)}{\mathbf{L}_\nu(t)}\,\mathrm{d}t\bigg).
\end{align}
We combine these formulas and the bounds of Section \ref{sec2} to prove the following theorem.

\begin{theorem}\label{thmalm}The following inequalities hold:

\vspace{2mm}

\noindent (i) For $0<x<y$,
\begin{equation}\label{thmalm1}\frac{x}{y}\sqrt{\frac{3(2\nu+3)+y^2}{3(2\nu+3)+x^2}}\frac{I_\nu(x)}{I_\nu(y)}<\frac{\mathbf{L}_\nu(x)}{\mathbf{L}_\nu(y)}<\frac{I_\nu(x)}{I_\nu(y)},
\end{equation}
where the lower bound holds for $\nu\geq-\frac{1}{2}$ and the upper bound holds for $\nu\geq\frac{1}{2}$.

\vspace{2mm}

\noindent (ii) Let $\nu\geq-\frac{1}{2}$.  Then, for $0<x<y$,
\begin{align}&\frac{\mathrm{e}^{\sqrt{(\nu+1/2)^2+x^2}}}{\mathrm{e}^{\sqrt{(\nu+1/2)^2+y^2}}}\bigg(\frac{x}{y}\bigg)^{\nu+1}\sqrt{\frac{3(2\nu+3)+y^2}{3(2\nu+3)+x^2}}\Bigg(\frac{\nu+\frac{1}{2}+\sqrt{(\nu+\frac{1}{2})^2+y^2}}{\nu+\frac{1}{2}+\sqrt{(\nu+\frac{1}{2})^2+x^2}}\Bigg)^{\nu+\frac{1}{2}}<\frac{\mathbf{L}_\nu(x)}{\mathbf{L}_\nu(y)}<\nonumber\\
\label{thmu9}&\quad<\frac{\mathrm{e}^{\sqrt{(\nu+3/2)^2+x^2}}}{\mathrm{e}^{\sqrt{(\nu+3/2)^2+y^2}}}\frac{\tanh\big(\frac{1}{2}x\big)}{\tanh\big(\frac{1}{2}y\big)}\bigg(\frac{x}{y}\bigg)^{\nu}\Bigg(\frac{\nu+\frac{3}{2}+\sqrt{(\nu+\frac{3}{2})^2+y^2}}{\nu+\frac{3}{2}+\sqrt{(\nu+\frac{3}{2})^2+x^2}}\Bigg)^{\nu+\frac{3}{2}}.
\end{align}

\vspace{2mm}

\noindent (iii) Let $\nu\geq-\frac{1}{2}$.  Then, for $x>0$,
\begin{align}\label{coru1}&\frac{\mathrm{e}^{\sqrt{(\nu+3/2)^2+x^2}-\nu-3/2}}{\sqrt{\pi}2^{\nu-1}\Gamma(\nu+\frac{3}{2})}x^\nu\tanh\Big(\frac{x}{2}\Big)\Bigg(\frac{2\nu+3}{\nu+\frac{3}{2}+\sqrt{\big(\nu+\frac{3}{2}\big)^2+x^2}}\Bigg)^{\nu+\frac{3}{2}}<\mathbf{L}_\nu(x)<\nonumber \\
&\quad<\frac{\mathrm{e}^{\sqrt{(\nu+1/2)^2+x^2}-\nu-1/2}}{\sqrt{\pi}2^{\nu}\Gamma(\nu+\frac{3}{2})}x^{\nu+1}\sqrt{\frac{3(2\nu+3)}{3(2\nu+3)+x^2}}\Bigg(\frac{2\nu+1}{\nu+\frac{1}{2}+\sqrt{\big(\nu+\frac{1}{2}\big)^2+x^2}}\Bigg)^{\nu+\frac{1}{2}}.
\end{align}
\end{theorem}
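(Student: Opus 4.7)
The plan is to derive all three parts by integrating $(\log\mathbf{L}_\nu)'(t) = \mathbf{L}'_\nu(t)/\mathbf{L}_\nu(t)$, applying the bounds of Section \ref{sec2} to the integrand. For part (i), I would apply identity (\ref{star30}) both to $\mathbf{L}_\nu$ and to its Bessel counterpart (which lacks the $b_\nu$ term) and divide to obtain
\[\frac{\mathbf{L}_\nu(x)/\mathbf{L}_\nu(y)}{I_\nu(x)/I_\nu(y)} = \exp\bigg(\int_x^y\bigg[\frac{I_{\nu-1}(t)}{I_\nu(t)} - \frac{\mathbf{L}_{\nu-1}(t)}{\mathbf{L}_\nu(t)}\bigg]\,\mathrm{d}t\bigg).\]
The upper bound follows at once from Theorem \ref{thmil}(ii), since the upper half of (\ref{firstcor1}) makes the integrand negative. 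For the lower bound, the lower half of (\ref{firstcor1}) yields $\mathbf{L}_{\nu-1}/\mathbf{L}_\nu - I_{\nu-1}/I_\nu < 2b_\nu(t)/t$; inserting (\ref{bcrude2}) and splitting via the partial-fraction identity $3(2\nu+3)/[t(3(2\nu+3)+t^2)] = 1/t - t/(3(2\nu+3)+t^2)$ bounds $\int_x^y 2b_\nu(t)/t\,\mathrm{d}t$ above by $\log(y/x) - \frac{1}{2}\log\big[(3(2\nu+3)+y^2)/(3(2\nu+3)+x^2)\big]$, and exponentiating yields the stated factor.

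For part (ii), I would apply identity (\ref{star31}) and bound its two integrals separately. For the lower bound of (\ref{thmu9}) I upper-bound $\mathbf{L}_{\nu+1}/\mathbf{L}_\nu$ via (\ref{upper1}) with $\nu \mapsto \nu+1$ and $b_\nu$ via (\ref{bcrude2}); the integral $\int t/[a+\sqrt{a^2+t^2}]\,\mathrm{d}t$ (with $a = \nu+\tfrac{1}{2}$) evaluates to $\sqrt{a^2+t^2} - a\log(a+\sqrt{a^2+t^2})$ via the substitution $u = \sqrt{a^2+t^2}$, and the $b_\nu$ integral is handled as in part (i). For the upper bound of (\ref{thmu9}) I combine (\ref{lower1}) (with $\nu \mapsto \nu+1$) with the crude estimate $b_{\nu+1}(t) < \tfrac{1}{2}$ from Lemma \ref{blemma}(iii) to obtain the simpler $\mathbf{L}_{\nu+1}/\mathbf{L}_\nu > t/[\nu+\tfrac{3}{2}+\sqrt{(\nu+\tfrac{3}{2})^2+t^2}]$, and I lower-bound $2b_\nu(t)/t$ by $\mathrm{csch}(t)$ using Lemma \ref{blemma}(iv); the identity $\int \mathrm{csch}(t)\,\mathrm{d}t = \log\tanh(t/2)$ then produces the $\tanh(x/2)/\tanh(y/2)$ factor.

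For part (iii), taking $y \downarrow 0$ in (ii) is not admissible since that inequality requires $x < y$. Instead I would integrate the identity $(\log\mathbf{L}_\nu)'(t) = \mathbf{L}_{\nu+1}(t)/\mathbf{L}_\nu(t) + \nu/t + 2b_\nu(t)/t$ from $\epsilon$ to $x$, apply to the integrand the lower (respectively upper) bounds on $\mathbf{L}_{\nu+1}/\mathbf{L}_\nu$ and $2b_\nu(t)/t$ derived in the proof of the upper (resp. lower) bound of (ii), and let $\epsilon \downarrow 0$ using the asymptotic (\ref{ltend0}). The logarithmic singularities coming from $\log\mathbf{L}_\nu(\epsilon) \sim (\nu+1)\log\epsilon$, from $\int \nu/t\,\mathrm{d}t$, and from the $1/t$ parts of the bounds on $2b_\nu(t)/t$ (both $\mathrm{csch}(t)$ and the partial-fraction expression are $\sim 1/t$ at $0$) cancel exactly, leaving the advertised finite expressions. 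The main obstacle is careful bookkeeping: evaluating the three elementary but distinct integrals above, tracking the $\log\epsilon$ cancellation in (iii), and checking that the overlapping ranges of validity of the auxiliary inequalities — in particular the constraint $\nu \geq -\tfrac{1}{2}$ coming from Lemma \ref{blemma}(iv) and from (\ref{upper1}) applied with $\nu+1 \geq \tfrac{1}{2}$ — combine to the stated parameter range.
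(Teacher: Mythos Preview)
Your approach is sound and closely tracks the paper's, but there is one genuine gap and one unnecessary detour.

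\textbf{Part (i), lower bound: parameter range.} By routing both halves of (\ref{thmalm1}) through the single representation (\ref{star30}), you force yourself to invoke the \emph{lower} half of (\ref{firstcor1}) for the lower bound. That inequality is only stated for $\nu\ge 0$, so your argument delivers the lower bound of (\ref{thmalm1}) only for $\nu\ge 0$, not for the claimed range $\nu\ge -\tfrac12$. The paper avoids this by switching to the other representation (\ref{star31}) for the lower bound: there the relevant ratio is $\mathbf{L}_{\nu+1}/\mathbf{L}_\nu$, and one needs only the \emph{upper} half of (\ref{firstcor1}) with $\nu\mapsto\nu+1$, valid for $\nu+1\ge\tfrac12$, i.e.\ $\nu\ge -\tfrac12$. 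The $b_\nu$-integral is then handled exactly as you do via (\ref{bcrude2}). Your quotient formula is neat, but to match the stated range you should form it from (\ref{star31}) and its Bessel analogue rather than from (\ref{star30}).

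\textbf{Part (iii): a simpler route.} You need not redo the integration from $\epsilon$ to $x$. In (\ref{thmu9}) the hypothesis is $0<x<y$, so the admissible limit is $x\downarrow 0$ (not $y\downarrow 0$). The paper simply lets $x\downarrow 0$ in (\ref{thmu9}), uses $\lim_{x\downarrow0}\mathbf{L}_\nu(x)/x^{\nu+1}=1/(\sqrt{\pi}\,2^{\nu}\Gamma(\nu+\tfrac32))$ and $\lim_{x\downarrow0}\tfrac1x\tanh(\tfrac{x}{2})=\tfrac12$, and then relabels $y$ as $x$. Your $\epsilon$-argument is of course equivalent, but it duplicates work already done in (ii).

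Part (ii) of your plan matches the paper's proof essentially verbatim; your use of (\ref{lower1}) with $b_{\nu+1}<\tfrac12$ in place of the paper's use of the lower bound of (\ref{upper1}) with $b_{\nu+1}<\tfrac12$ yields the identical integrand $t/\big(\nu+\tfrac32+\sqrt{(\nu+\tfrac32)^2+t^2}\big)$, so there is no material difference.
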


\begin{proof}(i) We begin by noting two integral formulas.  From the relations  $I_\nu'(t)=I_{\nu-1}(t)-\frac{\nu}{t}I_\nu(t)$ and $I_\nu'(t)=I_{\nu+1}(t)+\frac{\nu}{t}I_\nu(t)$, we have
\begin{align*}\int\frac{I_{\nu}(t)}{I_{\nu-1}(t)}\,\mathrm{d}t&=\int\bigg(\frac{I_{\nu-1}'(t)}{I_{\nu-1}(t)}-\frac{\nu-1}{t}\bigg)\,\mathrm{d}t=\log\big(I_{\nu-1}(t)\big)-(\nu-1)\log(t),\\
\int\frac{I_{\nu-1}(t)}{I_{\nu}(t)}\,\mathrm{d}t&=\int\bigg(\frac{I_{\nu}'(t)}{I_{\nu}(t)}+\frac{\nu}{t}\bigg)\,\mathrm{d}t=\log\big(I_{\nu}(t)\big)+\nu\log(t).
\end{align*}

Now, let us prove the upper bound.  By the upper bound of (\ref{firstcor1}), we have, for $\nu\geq\frac{1}{2}$,
\begin{align*}\int_x^y\frac{\mathbf{L}_{\nu}(t)}{\mathbf{L}_{\nu-1}(t)}\,\mathrm{d}t>\int_x^y\frac{I_{\nu}(t)}{I_{\nu-1}(t)}\,\mathrm{d}t=\log\bigg(\frac{I_{\nu}(y)}{I_{\nu}(x)}\bigg)+\nu\log\bigg(\frac{y}{x}\bigg).
\end{align*}
Combining this inequality with (\ref{star30}) yields the upper bound.  For the lower bound, we use the upper bound of (\ref{firstcor1}) to obtain, for $\nu\geq-\frac{1}{2}$,
\begin{align*}\int_x^y\frac{\mathbf{L}_{\nu+1}(t)}{\mathbf{L}_{\nu}(t)}\,\mathrm{d}t<\int_x^y\frac{I_{\nu+1}(t)}{I_{\nu}(t)}\,\mathrm{d}t=\log\bigg(\frac{I_{\nu}(y)}{I_{\nu}(x)}\bigg)-\nu\log\bigg(\frac{y}{x}\bigg).
\end{align*}
Also, by inequality (\ref{bcrude2}),
\begin{equation}\label{mncz}2\int_x^y\frac{b_\nu(t)}{t}\,\mathrm{d}t<\int_x^y\frac{1}{t\big(1+\frac{1}{3(2\nu+3)}t^2\big)}\,\mathrm{d}t=\log\Bigg(\frac{y}{x}\sqrt{\frac{3(2\nu+3)+x^2}{3(2\nu+3)+y^2}}\Bigg).
\end{equation}
On combining these inequalities with (\ref{star31}) we obtain the lower bound.

\vspace{2mm}

\noindent (ii) Let $\nu\geq-\frac{1}{2}$. We first prove the upper bound.  From the lower bound of (\ref{upper1}) and the fact that $b_\nu(t)<\frac{1}{2}$ we have
\begin{align}\label{intlp}&\int_x^y\frac{\mathbf{L}_{\nu+1}(t)}{\mathbf{L}_\nu(t)}\,\mathrm{d}t>\int_x^y\frac{t}{\nu+\frac{1}{2}+2b_{\nu+1}(t)+\sqrt{\big(\nu+\frac{3}{2}\big)^2+t^2}}\,\mathrm{d}t \\
&\quad\quad>\int_x^y\frac{t}{\nu+\frac{3}{2}+\sqrt{\big(\nu+\frac{3}{2}\big)^2+t^2}}\,\mathrm{d}t \nonumber \\
&\quad\quad=\sqrt{(\nu+\tfrac{3}{2})^2+y^2}-\sqrt{(\nu+\tfrac{3}{2})^2+x^2} +(\nu+\tfrac{3}{2})\log\Bigg(\frac{\nu+\frac{3}{2}+\sqrt{(\nu+\frac{3}{2})^2+x^2}}{\nu+\frac{3}{2}+\sqrt{(\nu+\frac{3}{2})^2+y^2}}\Bigg). \nonumber 
\end{align}
Also, from the lower bound of (\ref{star5}), we obtain
\begin{equation*}2\int_x^y\frac{b_\nu(t)}{t}\,\mathrm{d}t>\int_x^y\mathrm{csch}(t)\,\mathrm{d}t=\log\Bigg(\frac{\tanh\big(\frac{1}{2}x\big)}{\tanh\big(\frac{1}{2}y\big)}\Bigg).
\end{equation*}
On combining these inequalities with (\ref{star31}) we obtain the upper bound, as required.

Now, we prove the lower bound.  From the upper bound of (\ref{upper1}) we have
\begin{align*}&\int_x^y\frac{\mathbf{L}_{\nu+1}(t)}{\mathbf{L}_\nu(t)}\,\mathrm{d}t<\int_x^y\frac{t}{\nu+\frac{1}{2}+\sqrt{(\nu+\frac{1}{2})^2+t^2}}\,\mathrm{d}t \\
&\quad\quad=\sqrt{(\nu+\tfrac{1}{2})^2+y^2}-\sqrt{(\nu+\tfrac{1}{2})^2+x^2}+(\nu+\tfrac{1}{2})\log\Bigg(\frac{\nu+\frac{1}{2}+\sqrt{(\nu+\frac{1}{2})^2+x^2}}{\nu+\frac{1}{2}+\sqrt{(\nu+\frac{1}{2})^2+y^2}}\Bigg).
\end{align*}
On combining this bound and inequality (\ref{mncz}) with (\ref{star31}) we obtain the lower bound, as required.

\vspace{2mm}

\noindent (iii) Let $x\downarrow0$ in (\ref{thmu9}) and use the limits $\lim_{x\downarrow0}\frac{\mathbf{L}_\nu(x)}{x^{\nu+1}}=\frac{1}{\sqrt{\pi}2^{\nu}\Gamma(\nu+\frac{3}{2})}$ and $\lim_{x\downarrow0}\frac{1}{x}\tanh(\frac{x}{2})=\frac{1}{2}$.  Then replace $y$ by $x$.
\end{proof}

\begin{remark}The double inequality in part (i) of Theorem \ref{thmalm} allows one to take advantage of the substantial literature (as given in the Introduction) on bounds for the ratio $I_\nu(x)/I_\nu(y)$.  For example, inequality (2.19) of \cite{baricz2} and  inequality (1.6) of \cite{jb96}, respectively, give that, for $0<x<y$,
\begin{align}\label{hbv}\frac{I_\nu(x)}{I_\nu(y)}&>\frac{\cosh(x)}{\cosh(y)}\bigg(\frac{x}{y}\bigg)^{\nu}, \quad \nu>-\tfrac{1}{2}, \\
\label{johl} \frac{I_\nu(x)}{I_\nu(y)}&>\mathrm{e}^{x-y}\bigg(\frac{y+\nu}{x+\nu}\bigg)^{\nu}\bigg(\frac{x}{y}\bigg)^{\nu}, \quad \nu\geq0,
\end{align}
where (\ref{johl}) is also valid for $\nu>-1$ (see \cite{jb96}) provided suitable restrictions are imposed on $x$ and $y$.  Combining (\ref{hbv}) with the lower bound of (\ref{thmalm1}) gives
\begin{equation*}\frac{\mathbf{L}_\nu(x)}{\mathbf{L}_\nu(y)}>\frac{\cosh(x)}{\cosh(y)}\bigg(\frac{x}{y}\bigg)^{\nu+1}\sqrt{\frac{3(2\nu+3)+y^2}{3(2\nu+3)+x^2}}, \quad \nu>-\tfrac{1}{2}, \:0<x<y,
\end{equation*}
which complements the upper bound (\ref{bpineq5}) that was proved by \cite{bp14}.  Now, from (\ref{johl}), we have
\begin{equation}\label{setit}\frac{\mathbf{L}_\nu(x)}{\mathbf{L}_\nu(y)}>\mathrm{e}^{x-y}\bigg(\frac{y+\nu}{x+\nu}\bigg)^{\nu}\bigg(\frac{x}{y}\bigg)^{\nu+1}\sqrt{\frac{3(2\nu+3)+y^2}{3(2\nu+3)+x^2}}, \quad \nu\geq0, \:0<x<y.
\end{equation}
Arguing as we did in the proof of part (iii) of Theorem \ref{thmalm}, we obtain the inequality
\begin{equation}\label{vivi}\mathbf{L}_\nu(x)<\frac{1}{\sqrt{\pi}2^\nu\Gamma(\nu+\frac{3}{2})}\bigg(\frac{\nu}{x+\nu}\bigg)^\nu\sqrt{\frac{3(2\nu+3)}{3(2\nu+3)+x^2}} x^{\nu+1}\mathrm{e}^x, \quad \nu\geq0, \:x>0.
\end{equation}
Inequalities (\ref{setit}) and (\ref{vivi}) are outperformed by the more complicated corresponding bounds from the two-sided inequalities (\ref{thmu9}) and (\ref{coru1}), respectively.
\end{remark}

\begin{remark}For fixed $y>0$, both the lower bound and upper bound of (\ref{thmu9}) are $O(x^{\nu+1})$, as $x\downarrow0$.  For fixed $x>0$, as $y\rightarrow\infty$, the lower bound is $O(y^{1/2}\mathrm{e}^{-y})$, which is the correct order (see (\ref{linfty})).  However, the upper bound is $O(y^{3/2}\mathrm{e}^{-y})$, as $y\rightarrow\infty$.  That the upper bound is not of the correct order as $y\rightarrow\infty$ can be at least partly traced back to the use of the inequality $b_\nu(t)<\frac{1}{2}$ in its derivation.  This inequality is tight as $t\downarrow0$, but very crude for large $t$ (see (\ref{bnu1})).  Using the refined inequality (\ref{bcrude2}) in the derivation of the lower bound (\ref{thmu9}) enabled us to obtain the correct order as $y\rightarrow\infty$, but using this inequality to bound the integral (\ref{intlp}) leads to a considerably less tractable integral.  

To the best knowledge of this author, our lower bound for the ratio $\mathbf{L}_\nu(x)/\mathbf{L}_\nu(y)$ is the first such bound to appear in the literature.  The upper bound improves on inequality (\ref{nvnvj}) for all $\nu\geq-\frac{1}{2}$.  The comparison between inequality (\ref{bpineq5}) and our upper bound is more involved.  Denote these bounds by $u_\nu^{d,1}(x,y)$ and $u_\nu^{d,2}(x,y)$, respectively.  In fact, $u_{\frac{1}{2}}^{d,1}(x,y)=\mathbf{L}_{\frac{1}{2}}(x)/\mathbf{L}_{\frac{1}{2}}(y)$.  For fixed $y$, as $x\downarrow0$, we have $u_\nu^{d,1}(x,y)=O(x^{2-\nu})$ and therefore, for `small' $x$, $u_\nu^{d,2}(x,y)$ outperforms $u_\nu^{d,1}(x,y)$ for all $\nu>-\frac{1}{2}$.  For fixed $x$, as $y\rightarrow\infty$, $u_\nu^{d,2}(x,y)=O(y^{3/2}\mathrm{e}^{-y})$  and $u_\nu^{d,1}(x,y)=O(y^{\nu}\mathrm{e}^{-y})$, and so, in this regime, $u_\nu^{d,2}(x,y)\gg u_\nu^{d,1}(x,y)$ if $\nu<\frac{3}{2}$ and $u_\nu^{d,2}(x,y)\ll u_\nu^{d,1}(x,y)$ if $\nu>\frac{3}{2}$.  

\end{remark}

\begin{remark}\label{rem12}The lower bound and upper bounds of (\ref{coru1}) for $\mathbf{L}_\nu(x)$ are both tight in the limit $x\downarrow0$.  As $x\rightarrow\infty$, the upper bound is $O(x^{-1/2}\mathrm{e}^x)$ and the lower bound is $O(x^{-3/2}\mathrm{e}^x)$, whereas $\mathrm{L}_\nu(x)=O(x^{-1/2}\mathrm{e}^x)$.  That the lower bound is not of the correct order as $x\rightarrow\infty$ is a consequence of the fact that the upper bound of (\ref{thmu9}) is not of the correct order as $y\rightarrow\infty$, as discussed in the previous remark.

We examine the upper bound of (\ref{coru1}) in further detail.  Denote this bound by $u_\nu^{e,1}(x)$.  It has the following asymptotic behaviour:
\begin{align*}u_\nu^{e,1}(x)&\sim\frac{x^{\nu+1}}{\sqrt{\pi}2^{\nu}\Gamma(\nu+\frac{3}{2})}\bigg(1+\frac{2(\nu+2)x^2}{3(2\nu+1)(2\nu+3)}\bigg), \quad x\downarrow0, \\
u_\nu^{e,1}(x) &\sim a_\nu\frac{\mathrm{e}^x}{\sqrt{x}}, \quad x\rightarrow\infty, 
\end{align*}
where 
\[a_\nu=\sqrt{\frac{12}{\pi}}\frac{\sqrt{\nu+\frac{3}{2}}}{\Gamma(\nu+\frac{3}{2})}(\nu+\tfrac{1}{2})^{\nu+1/2}\mathrm{e}^{-\nu-1/2},\]
which, by an application of Stirling's inequality \cite[5.6.1]{olver}, can be bounded by
\begin{equation}\label{aplm}\frac{\sqrt{6}}{\pi}\sqrt{\frac{2\nu+3}{2\nu+1}}\mathrm{e}^{-\frac{1}{6(2\nu+1)}}<a_\nu<\frac{\sqrt{6}}{\pi}\sqrt{\frac{2\nu+3}{2\nu+1}}, \quad \nu>-\tfrac{1}{2}.
\end{equation}
We see that the second term in the $x\downarrow0$ expansion of $u_\nu^{e,1}(x)$ approaches that of $\mathbf{L}_\nu(x)$ as $\nu\rightarrow\infty$.  It is straightforward to show that there exists a constant $C>0$, independent of $\nu$, such that $a_\nu<C$ for all $\nu\geq-\frac{1}{2}$, which means that, whilst $a_\nu>\frac{1}{\sqrt{2\pi}}$ (recall that $\mathbf{L}_\nu(x)\sim\frac{1}{\sqrt{2\pi x}}\mathrm{e}^x$, as $x\rightarrow\infty$), the upper bound is always within an absolute constant multiple of $\mathbf{L}_\nu(x)$ for `large' $x$.  Numerical results (see Table \ref{table5}) obtained using \emph{Mathematica} support this analysis and suggest that; for fixed $x>0$, the relative error in approximating $\mathbf{L}_\nu(x)$ decreases as $\nu$ increases, and, for fixed $\nu$, the relative error increases from an initial value of 0 at $x=0$ up to a maximum $\sqrt{2\pi}a_\nu-1$ as $x$ increases.


Several simpler upper bounds for $\mathbf{L}_\nu(x)$ in terms of elementary functions were given in Section 3 of \cite{bp14}, although none of these bounds are of the correct order as $x\rightarrow\infty$.  In addition, \cite{bp14} obtained the following useful inequality:
\begin{equation}\label{bpstu}\mathbf{L}_\nu(x)<\frac{2\Gamma(\nu+2)}{\sqrt{\pi}\Gamma(\nu+\frac{3}{2})}I_{\nu+1}(x), \quad x>0, \:\nu>-\tfrac{1}{2}.
\end{equation}
One can thus exploit the extensive literature on inequalities for $I_\nu(x)$ to upper bound $\mathbf{L}_\nu(x)$.  For example, applying Theorem 2 of \cite{p99} to (\ref{bpstu}) gives the bound
\begin{align}\label{nearr}\mathbf{L}_\nu(x)<\frac{\sqrt{2}\Gamma(\nu+2)}{\pi\Gamma(\nu+\frac{3}{2})}\frac{\mathrm{e}^{\sqrt{x^2+(\nu+1)^2}+\frac{2}{\sqrt{x^2+(\nu+1)^2}}}}{(x^2+(\nu+1)^2)^{1/4}}\bigg(\frac{x}{\nu+1+\sqrt{x^2+(\nu+1)^2}}\bigg)^{\nu+1},
\end{align}
which holds for all $\nu>-\frac{1}{2}$ and $x>0$.  The upper bound (\ref{nearr}), which we denote by $u_\nu^{e,2}(x)$, is $O(x^{\nu+1})$, as $x\downarrow0$, and $O(x^{-1/2}\mathrm{e}^x)$, as $x\rightarrow\infty$, which, as is the case for $u_\nu^{e,1}(x)$, is in agreement with the asymptotic behaviour of $\mathbf{L}_\nu(x)$.  However, the multiplicative constant in the leading term of the $x\downarrow0$ expansion is larger than that of $\mathbf{L}_\nu(x)$.  As $x\rightarrow\infty$, $u_\nu^{e,2}(x)\sim b_\nu x^{-1/2}\mathrm{e}^x$, where $b_\nu=\frac{\sqrt{2}\Gamma(\nu+2)}{\pi\Gamma(\nu+\frac{3}{2})}$.  Unlike $a_\nu$, the constant $b_\nu$ increases at rate $\sqrt{\nu}$ as $\nu\rightarrow\infty$ (see \cite[5.6.4]{olver}).  However, for small enough $\nu$ we have $b_\nu<a_\nu$; we used \emph{Mathematica} to find that $b_\nu=a_\nu$ when $\nu=2.521$.  Further numerical experiments suggest that $u_\nu^{e,1}(x)<u_\nu^{e,2}(x)$ for all $x>0$ if $\nu>2.521$.  Some results are reported in Table \ref{table6}.  Notice that up to $x=100$ we have $u_{2.5}^{e,1}(x)<u_{2.5}^{e,2}(x)$, and then, as would be expected, this inequality is reversed for `large' values of $x$.

\begin{table}[h]
\centering
\caption{\footnotesize{Relative error in approximating $\mathbf{L}_{\nu}(x)$ by the upper bound of (\ref{coru1}).}}
\label{table5}
{\scriptsize
\begin{tabular}{|c|rrrrrrrrrr|}
\hline
 \backslashbox{$\nu$}{$x$}      &   0.5 &    1 &    2.5 &    5 &     10 & 15 & 25 & 50 & 100 & 200 \\
 \hline
0 & 0.0743 & 0.2403 & 0.8053 & 1.3722  & 1.7107 & 1.7994 & 1.8540 & 1.8839 & 1.8951 & 1.9000 \\
1 & 0.0163 & 0.0618 & 0.2928 & 0.6854  & 1.0716 & 1.2020 & 1.2914 & 1.3462 & 1.3690 & 1.3792 \\
2.5 & 0.0052 & 0.0204 & 0.1151 & 0.3523  & 0.7301 & 0.9026 & 1.0340 & 1.1214 & 1.1602 & 1.1782 \\
5 & 0.0017 & 0.0070 & 0.0431 & 0.1612     & 0.4601  & 0.6600 & 0.8388 & 0.9706 & 1.0333 & 1.0635 \\
10 & 0.0005 & 0.0021 & 0.0135 & 0.0582  & 0.2302 & 0.4133 & 0.6309 & 0.8216 & 0.9238 & 0.9762 \\  
  \hline
\end{tabular}}
\end{table}

\begin{table}[h]
\centering
\caption{\footnotesize{Relative error in approximating $\mathbf{L}_{\nu}(x)$ by the upper bound (\ref{nearr}).}}
\label{table6}
{\scriptsize
\begin{tabular}{|c|rrrrrrrrrr|}
\hline
 \backslashbox{$\nu$}{$x$}      &   0.5 &    1 &    2.5 &    5  & 10 & 15 & 25 & 50 & 100 & 200 \\
 \hline
0 & 5.3417 & 3.2145 & 1.1605 & 0.6502  & 0.4549 & 0.3931 & 0.3445 & 0.3086 & 0.2908 & 0.2820 \\
1 & 1.7475 & 1.5473 & 1.0183 & 0.7830  & 0.7437 & 0.7328 & 0.7207 & 0.7098 & 0.7039 & 0.7008 \\
2.5 & 0.8072 & 0.7908 & 0.7309 & 0.7459  & 0.9201 & 1.0127 & 1.0853 & 1.1374 & 1.1627 & 1.1751  \\
5 & 0.4167 & 0.4215 & 0.4563 & 0.5838    & 0.9410  & 1.1928 & 1.4306 & 1.6218 & 1.7208 & 1.7712 \\
10 & 0.2102 & 0.2151 & 0.2491 & 0.3664  & 0.7552 & 1.1596 & 1.6780 & 2.1815 & 2.4709 & 2.6250  \\  
  \hline
\end{tabular}}
\end{table}
\end{remark}

\subsection*{Acknowledgements}
The author is supported by a Dame Kathleen Ollerenshaw Research Fellowship.  

\footnotesize


\begin{thebibliography}{99}
\addcontentsline{toc}{section}{References}

\bibitem{amos74} Amos, D. E. Computation of modified Bessel functions and their ratios. \emph{Math. Comput.}
$\mathbf{28}$ (1974), pp. 239--251.


\bibitem{b09} Baricz, \'{A}. Tight bounds for the generalized Marcum $Q$-function. \emph{J. Math. Anal. Appl.} $\mathbf{360}$ (2009), pp. 265--277.

\bibitem{b092} Baricz, \'{A}. On a product of modified Bessel functions. \emph{P. Amer. Math. Soc.} $\mathbf{137}$ (2009), pp. 189--193.

\bibitem{baricz2} Baricz, \'{A}.  Bounds for modified Bessel functions of the first and second kinds. \emph{P. Edinb. Math. Soc.} $\mathbf{53}$ (2010), pp. 575--599.

\bibitem{b15} Baricz, \'{A}. Bounds for Tur\'{a}nians of modified Bessel functions. \emph{Expo. Math.} $\mathbf{33}$ (2015), pp. 223--251.

\bibitem{bps17} Baricz, \'{A}., Ponnusamy, S. and Singh, S. Tur\'{a}n type inequalities for Struve functions. \emph{J. Math. Anal. Appl.} $\mathbf{445}$ (2017), pp. 971--984.

\bibitem{bp13} Baricz, \'{A}. and Pog\'any, T. K. Integral representations and summations of modified Struve function. \emph{Acta Math. Hung.} $\mathbf{141}$ (2013), pp. 254--281.

\bibitem{bp14} Baricz, \'{A}. and Pog\'any, T. K.  Functional inequalities for modified Struve functions. \emph{P. Roy. Soc. Edinb. A} $\mathbf{144}$ (2014), pp. 891--904.

\bibitem{bp142} Baricz, \'{A}. and Pog\'any, T. K.  Functional inequalities for modified Struve functions II. \emph{Math. Inequal. Appl.} $\mathbf{17}$ (2014), pp. 1387--1398.

\bibitem{bs09} Baricz, \'{A} and Sun, Y. New bounds for the generalized Marcum $Q$-function. \emph{IEEE Trans. Info. Th.} $\mathbf{55}$ (2009), pp. 3091--3100.


\bibitem{bord} Bordelon, D. J. Problem 72-15, inequalities for special functions. \emph{SIAM Rev.} $\mathbf{15}$ (1973), pp. 665--668.

\bibitem{g18} Gaunt, R. E. Inequalities for integrals of the modified Struve function of the first kind. \emph{Results Math.} $\mathbf{73}$:65 (2018).

\bibitem{ast07} Gil, A. Segura, J. and Temme, N. M. \emph{Numerical Methods for Special Functions}. SIAM, Philadelphia, 2007.

\bibitem{g32} Gronwall, T. H. An inequality for the Bessel functions of the first kind with imaginary argument. \emph{Ann. Math.} $\mathbf{33}$ (1932), pp. 275--278.

\bibitem{kg13} Hornik, K. and Gr\"{u}n, B. Amos-type bounds for modified Bessel function ratios. \emph{J. Math. Anal. Appl.} $\mathbf{408}$ (2013), pp. 91--101.

\bibitem{hw94} Hurley, W. G. and Wilcox, D. J. Calculation of leakage inductance in transformer windings. \emph{IEEE Trans. Power Electron.} $\mathbf{9}$ (1994), pp. 121--126.

\bibitem{ifantis} Ifantis, E. K. and Siafarikas, P. D.  Bounds for modified Bessel functions. \emph{Rend. Circ.
Mat. Palermo} $\mathbf{40}$ (1991), pp. 347--356.

\bibitem{il07}Ismail, M.E.H. and Laforgia, A. Monotonicity properties of determinants of special functions. \emph{Constr. Approx.} $\mathbf{26}$ (2007) pp. 1--9.

\bibitem{im78}  Ismail, M.E.H. and Muldoon, M. E. Monotonicity of the zeros of a cross-product of Bessel functions. \emph{SIAM J. Math. Anal.} $\mathbf{9}$ (1978), pp. 759--767.


\bibitem{jb96} Joshi, C. M. and Bissu, S. K. Some inequalities of Bessel and modified Bessel functions. \emph{J. Austral. Math. Soc. A} $\mathbf{50}$ (1991), pp. 333--342.

\bibitem{jbb} Joshi, C. M. and Bissu, S. K. Inequalities for some special functions. \emph{J. Comput. Appl. Math.} $\mathbf{69}$ (1996), pp. 251--259.

\bibitem{jn98} Joshi, C. M. and Nalwaya, S. Inequalities for modified Struve functions. \emph{J. Indian Math. Soc.} $\mathbf{65}$ (1998) pp. 49--57.

\bibitem{ks08} Khazron, P. A. and Selesnick, I. W. Bayesian estimation of Bessel K form random vectors in AWGN. \emph{IEEE Signal Process. Lett.} $\mathbf{15}$ (2008), pp. 261--264.

\bibitem{l91} Laforgia, A.  Bounds for modified Bessel functions. \emph{J. Comput. Appl. Math.} $\mathbf{34}$ (1991), pp. 263--267.

\bibitem{lm89} Laforgia A. and Mathis L. M. Bounds for Bessel functions. \emph{Rend. Circ. Mat. Palermo} $\mathbf{38}$ (1989), pp. 319--328.

\bibitem{ln10} Laforgia, A. and Natalini, P. Some Inequalities for Modified Bessel Functions. \emph{J. Inequal. Appl.} (2010), Art. ID 253035, 10 pp.

\bibitem{lorch} Lorch, L. Monotonicity of the zeros of a cross-product of Bessel functions. \emph{Methods Appl. Anal.} $\mathbf{1}$ (1994), pp. 75--80.

\bibitem{lbf03} Lushnikov, A. A., Bhatt, J. S. and Ford, I. J. Stochastic approach to chemical kinetics in ultrafine aerosols. \emph{J. Aerosol Sci.} $\mathbf{34}$ (2003) pp. 1117--1133.

\bibitem{mh69} Miles, J. W. and Huppert, H. E. Lee waves in a stratified flow. Part 4. Perturbation approximations. \emph{J. Fluid Mech.} $\mathbf{35}$ (1969), pp. 497--525.

\bibitem{olver} Olver, F. W. J., Lozier, D. W., Boisvert, R. F. and Clark, C. W.  \emph{NIST Handbook of Mathematical Functions.} Cambridge University Press, 2010.

\bibitem{nasell} N\r{a}sell, I.  Inequalities for Modified Bessel Functions.  \emph{Math. Comput.} $\mathbf{28}$ (1974), pp. 253--256.

\bibitem{nasell2} N\r{a}sell, I. Rational bounds for ratios of modified Bessel functions. \emph{SIAM J. Math. Anal.} $\mathbf{9}$ (1978), pp. 1--11. 

\bibitem{nh73} N\r{a}sell, I. and Hirsch, W. M. The transmission dynamics of schistosomiasis. \emph{Comm. Pure Appl. Math.} $\mathbf{26}$ (1973), pp. 395--453.

\bibitem{p99} Pal'tsev, B. V. Two-Sided Bounds Uniform in the Real Argument and the Index for Modified Bessel Functions. \emph{Math. Notes+} $\mathbf{65}$ (1999), pp. 571--581.

\bibitem{paris} Paris, R. B. An Inequality for the Bessel Function $J_\nu (\nu x)$. \emph{SIAM J. Math. Anal.} $\mathbf{15}$ (1984), pp. 203--205.

\bibitem{pm50} Phillips, R. S. and Malin, H. Bessel function approximations. \emph{Amer. J. Math.} $\mathbf{72}$ (1950) pp. 407--418.

\bibitem{ross73} Ross, D. K. Problem 72-15, inequalities for special functions. \emph{SIAM Rev.} $\mathbf{15}$ (1973), pp. 668--670.

\bibitem{rs16} Ruiz-Antol\'{i}n, D. and Segura. J. A new type of sharp bounds for ratios of modified Bessel functions. \emph{J. Math. Anal. Appl.} $\mathbf{443}$ (2016), pp. 1232--1246.

\bibitem{segura} Segura, J.  Bounds for ratios of modified Bessel functions and associated Tur\'{a}n-type inequalities.  \emph{J. Math. Anal. Appl.} $\mathbf{374}$ (2011), pp. 516--528.

\bibitem{s12} Segura, J. On bounds for solutions of monotonic first order difference-differential systems. \emph{J. Inequal. Appl.} $\mathbf{2012}$:65 (2012), 17 pp.

\bibitem{stoch} Shaked, M. and Shanthikumar, J. G. \emph{Stochastic Orders and their Applications.} Associated Press, 1994.

\bibitem{si95} Sitnik, S. M. Inequalities for Bessel functions. \emph{Dokl. Akad. Nauk SSSR} $\mathbf{340}$ (1995), pp. 29--32.

\bibitem{soni} Soni, R. P. On an inequality for modified Bessel functions. \emph{J. Math. Phys. Camb.} $\mathbf{44}$ (1965), pp. 406--407.

\bibitem{s84} Stephens, G. L. Scattering of plane waves by soft obstacles: anomalous diffraction theory for circular cylinders. \emph{Appl. Opt.} $\mathbf{23}$ (1984), pp. 954--959.

\bibitem{w55} Watkins, C. E. and Berman, J. H. On the kernel function of the integral equation relating the lift and downwash distributions of oscillating wings in supersonic flow. \emph{NACA Tech. Note} 1257 (1955), pp. 147--164.

\bibitem{w54} Watkins, C. E., Runyan, H. L. and Woolston, D. S. On the kernel function of the integral equation relating the lift and downwash distributions of oscillating finite wings in subsonic flow. \emph{NACA Tech. Note} 1234 (1954), pp. 703--718.


\end{thebibliography}
\end{document}